\renewcommand{\PrintDOI}[1]{\href{http://dx.doi.org/\detokenize{#1}}{doi: \detokenize{#1}}%
	\IfEmptyBibField{pages}{, (to appear in print)}{}}
\theoremstyle{definition}
\newtheorem{theorem}{Theorem}[section]
\newtheorem{lemma}[theorem]{Lemma}
\newtheorem{corollary}[theorem]{Corollary}
\newtheorem{proposition}[theorem]{Proposition}
\theoremstyle{definition}
\newtheorem{definition}[theorem]{Definition}
\newtheorem{example}[theorem]{Example}
\theoremstyle{remark}
\newtheorem{remark}[theorem]{Remark}
\numberwithin{equation}{section}
\numberwithin{equation}{section}
\begin{document}

\title{$Q$-Series and Quantum Spin Networks}

\author{Mohamed Elhamdadi}
\address{Department of Mathematics, University of South Florida, 
Tampa, FL USA}
\email{emohamed@mail.usf.edu}

\author{Mustafa Hajij}
\address{Department of Computer Science and Engineering, Ohio State University, 
Columbus, Ohio USA}
\email{hajij.1@osu.edu}

\author{Jesse S F Levitt}
\address{Department of Mathematics, University Of Southern California, 
Los Angeles, CA USA}
\email{jslevitt@usc.edu}





\begin{abstract}
The tail of a quantum spin network in the two-sphere is a $q$-series associated to the network.
We study the existence of the head and tail functions of quantum spin networks colored by $2n$.
We compute the $q$-series for an infinite family of quantum spin networks and give the relation between the tail of these networks and the tail of the colored Jones polynomial.
Finally, we show that the family of quantum spin networks under study satisfies a natural product structure, making these networks satisfy a natural product structure.
\end{abstract}

\maketitle


\section{Introduction}

The colored Jones polynomial assigns to every link $L$ a sequence of Laurent polynomials $\left\{J_{n,L}\right\}_{n\in \mathbb{N}}$ where the positive integer $n$ is called the \textit{color}, see~\cite{TuraevWenzl93}.
Recent advances in the study of this polynomial showed that for alternating and adequate knots, certain coefficients of $J_{n,L}$ stabilize as $n$ increases~\cites{Armond1, EH, GL, Hajij2,BEH, EHS, EH2}. 
More precisely, for any alternating link $L$ the first $(n+1)$-coefficients of $J_{n,L}$ agree with the 
initial $(n+1)$-coefficients of $J_{n+1,L}$.
This gives rise to a $q$-series called the \textit{tail of the colored Jones polynomial}.
The highest degree coefficients of the colored Jones polynomial have similar stability properties and this induced power series is instead called the \textit{head} of the colored Jones polynomial.
This behavior was first observed by Dasbach and Lin~\cite{DL} and was proved by Armond~\cite{Armond1} for adequate links and independently by Garoufalidis and L\^e~\cite{GL} who also showed higher order coefficient stability.
One of the interesting aspects of the $q$-series coming from the colored Jones polynomial is their relation to the Ramanujan theta and false theta functions.
Armond and Dasbach~\cite{CodyOliver} used the properties of the colored Jones polynomial to prove the Andrew-Gordan identity for theta functions~\cite{AB}.
A corresponding identity for the false theta functions was given by the second author~\cite{Hajij2}.
The stability of the coefficients of other quantum invariants have also been studied recently.
For instance, in~\cite{Wataru} the coefficients of the $\mathfrak{sl}_{3}(\mathbb {C})$-colored Jones polynomial were used to give a generalization for the identity given by the second author in~\cite{Hajij1}.

Let $D$ be a planar trivalent graph in the $2$-sphere $S^2$.
Fix a positive integer $n$ and label every edge in $D$ by $n$ or $2n$ such that we obtain an admissible quantum spin network $D_n$ (see the precise definition in Section~\ref{background}).
This defines a sequence of quantum spin networks $\mathcal{D}=\{D_n\}_{n \in \mathbb{N}}$.
In~\cite{Hajij1} the second author initiated a study of the stability of the coefficients of the evaluations of the sequence elements of $\mathcal{ D}$, a study which arises naturally when one considers the tail of colored Jones polynomial~\cite{Armond1}.
Previous work~\cite{Hajij2} has shown that the quantum spin networks (QSNs) corresponding to adequate skein elements admit a well-defined tail.
However, it was also found that the tail might exist for QSNs whose skein elements are not adequate. 
  
In this paper we focus on quantum spin networks with all edges colored $2n$.
We show that the tail of such networks always exist and show how to compute the tail of such networks on an infinite family of graphs.
Additionally, these networks satisfy a natural product structure.
We further illustrate the relationship of the tail of these graphs to the tail of the colored Jones polynomial of alternating links.
Finally, we demonstrate how the tail of an infinite family of alternating links can be computed by considering the tail of a single QSN.
  
The paper is organized as follows.
In section~\ref{background}, we recall the necessary background needed for the paper. 
Section~\ref{QSNet} defines the admissibility of a QSN and discusses the Kauffman bracket evaluation.
In section~\ref{TQSN}, we recall the definition of the tail of a QSN and show that the tail of any QSN whose edges are all labeled by $2n$ exists.
Section~\ref{Application} covers a connection of the tail of a QSN to the tail of related colored Jones polynomial.
While section ~\ref{Prod} deals with the product structure on the tail of two QSNs.
As in the case of the colored Jones polynomial, the tail of quantum spin networks with edges colored $2n$ satisfies a natural product structure.
In section~\ref{Computing} we give the tail of the theta and tetrahedron graphs with edges colored $2n$.
We then use this and the theta and tetrahedron graphs to compute the tail of infinite families of other graphs.


\section{Background}
\label{background}

Let $F$ be a connected oriented surface, with boundary denoted $\partial F$.
When the boundary $\partial F$ is non-empty and a finite set of marked points are chosen on it, a link diagram in $F$ is a finite collection of arcs and simple closed curves in $F$ that meet $\partial F$ orthogonally at the marked points.
As in the case of standard link diagrams, the link diagram in $F$ will be assumed to have a finite number of crossing points.
Moreover, at crossings we will distinguish the strands using the usual convention of upper-strand and lower-strand.
We will work over $\mathcal{R}=\mathbb{Q}(A)$, the field generated by the indeterminate $A$ over the rational numbers. Furthermore set $A^4=q.$

\begin{definition}
\label{SkeinDef}
    Let $\mathcal{D}(F)$ be the free $\mathcal{R}$-module of link diagrams in $F$.
    The \textit{linear skein} $\mathcal{S}(F)$ of $F$ is the quotient of the module $\mathcal{D}(F)$ by the relations:
    \begin{eqnarray*}(1)\hspace{3 mm}
 	    \begin{minipage}[h]{0.06\linewidth}
 		    \vspace{0pt}
 	    	\scalebox{0.04}{\includegraphics{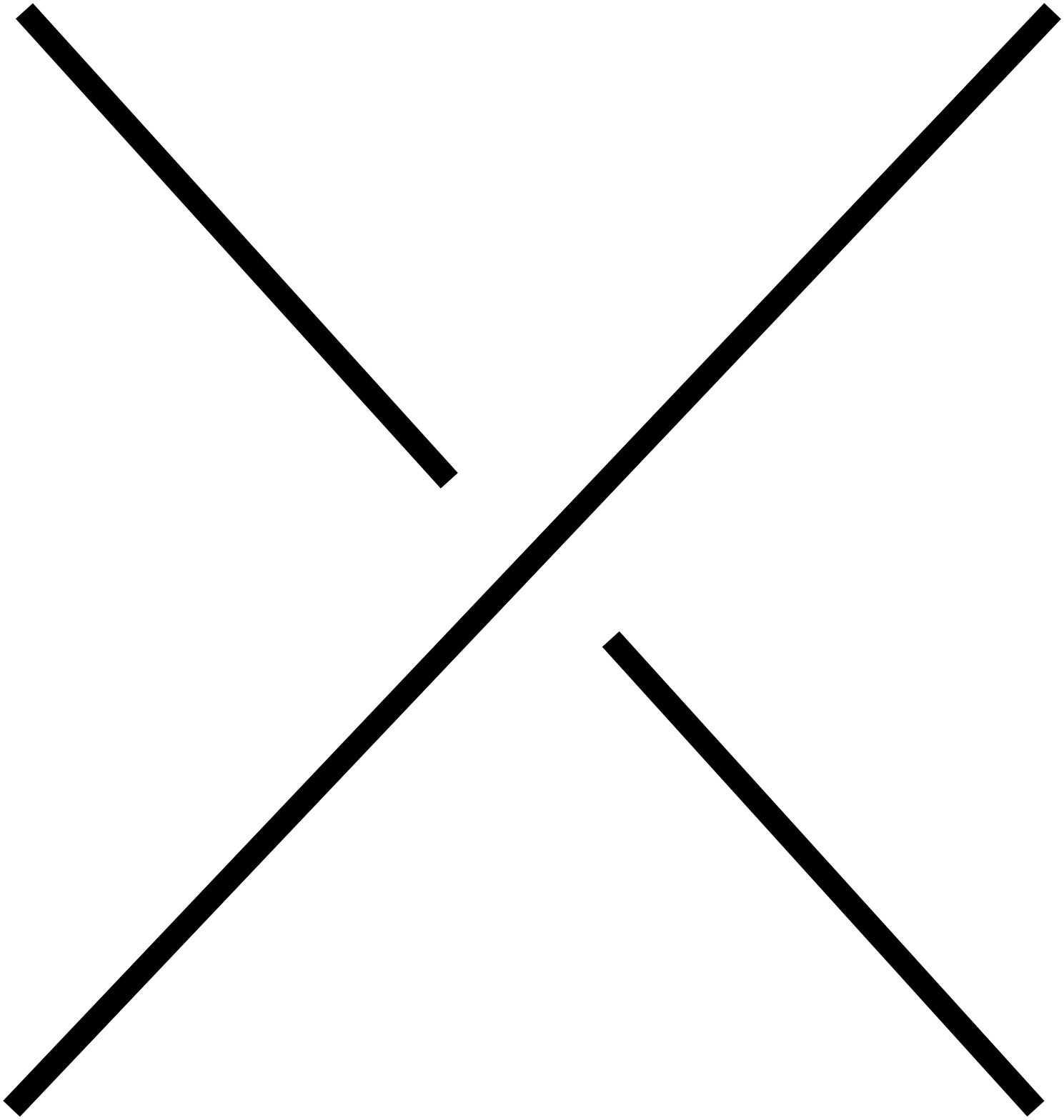}}
    	\end{minipage}
 	    -A
     	\begin{minipage}[h]{0.06\linewidth}
 		    \vspace{0pt}
 	    	\scalebox{0.04}{\includegraphics{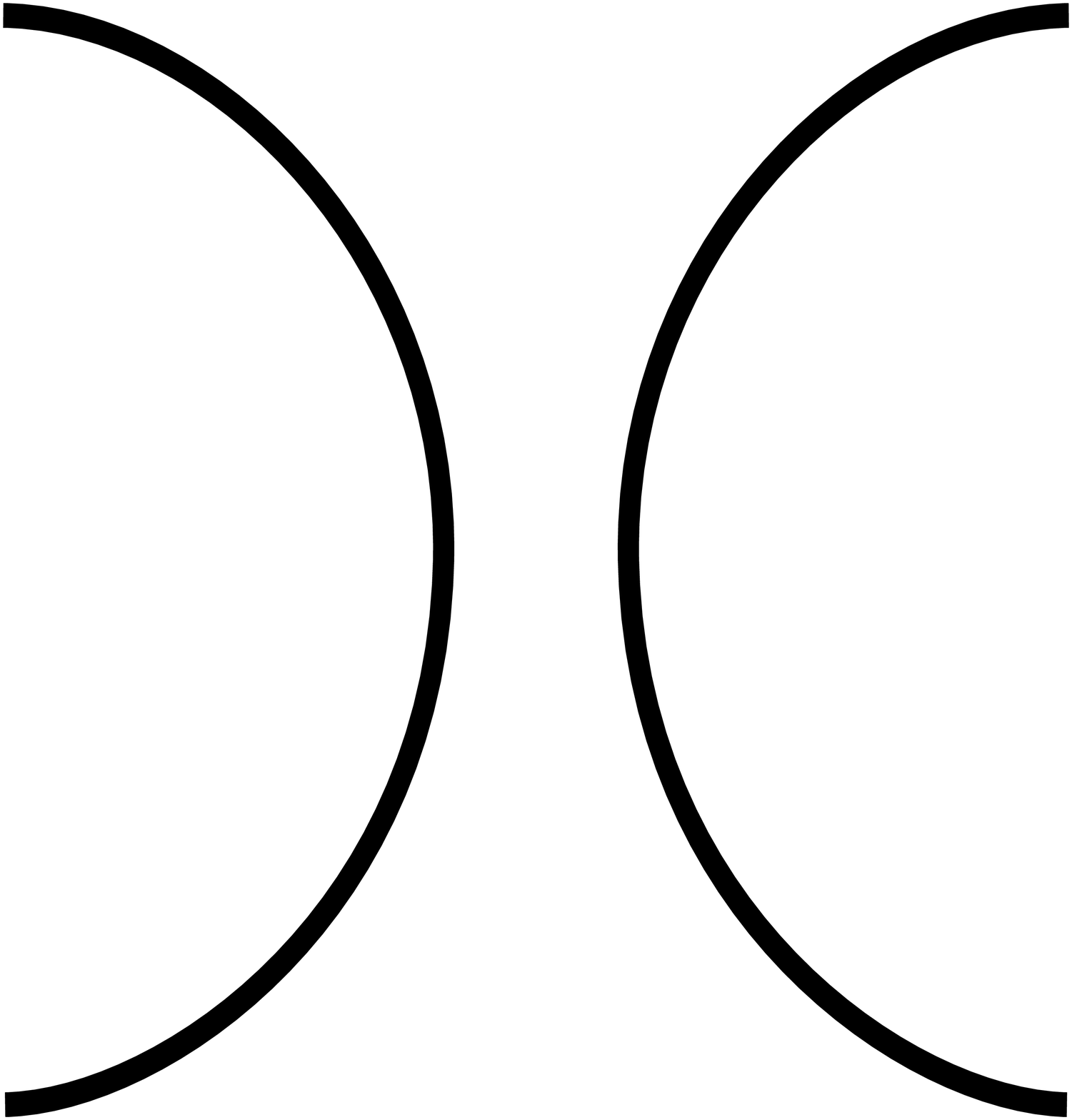}}
    	\end{minipage}
 	    -A^{-1} 
    	\begin{minipage}[h]{0.06\linewidth}
 		    \vspace{0pt}
 		    \scalebox{0.04}{\includegraphics{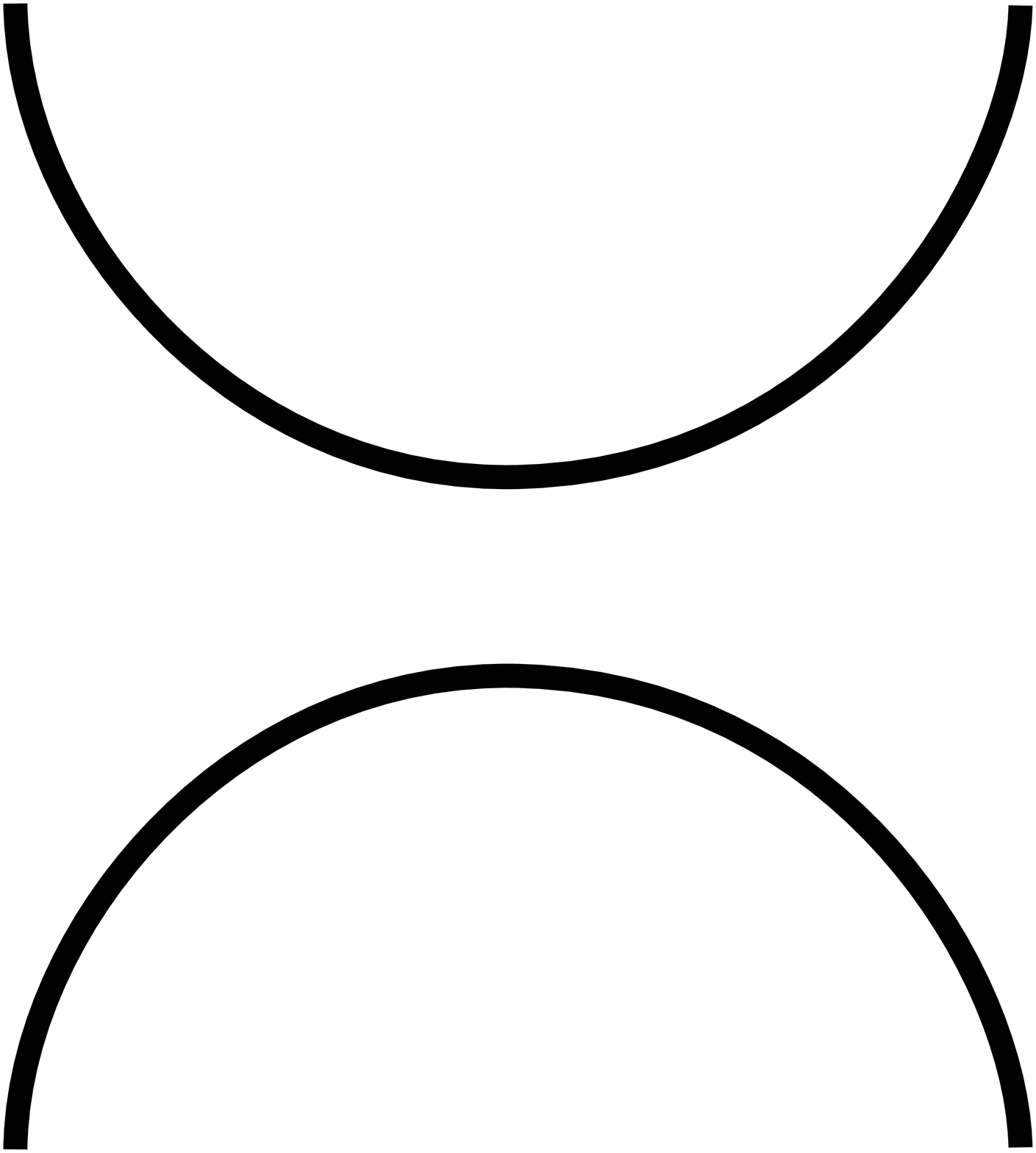}}
	    \end{minipage}
    	, \hspace{20 mm}
        (2)\hspace{3 mm} L\sqcup\hspace{0.4mm}
    	\begin{minipage}[h]{0.05\linewidth}
 		    \vspace{0pt}
 	    	\scalebox{0.02}{\includegraphics{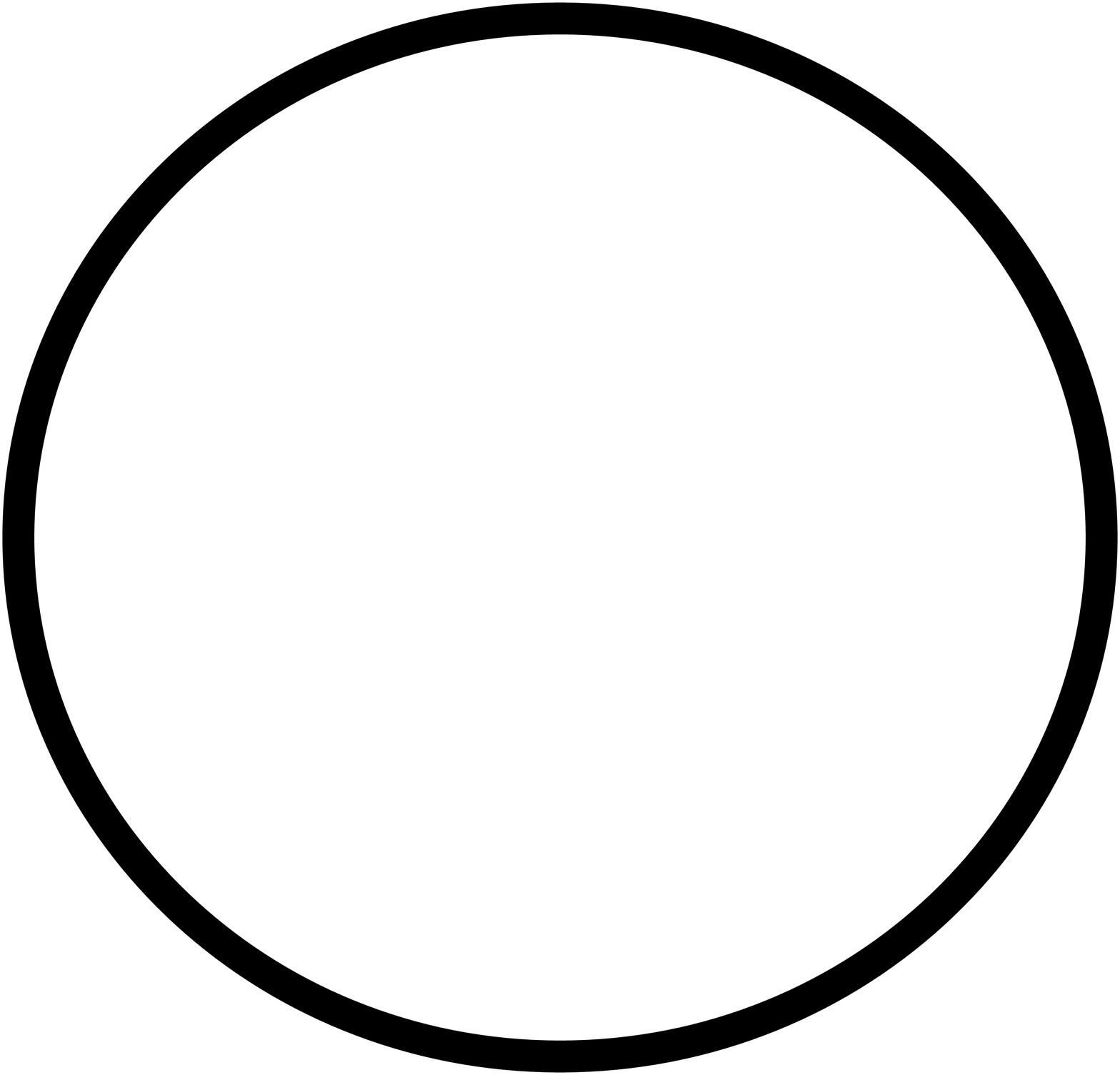}}
    	\end{minipage}
 	    +(A^2+A^{-2})L. 
    \end{eqnarray*}
    where $L\,\sqcup$
    \begin{minipage}[h]{0.05\linewidth}
 	    \vspace{0pt}
 	    \scalebox{0.02}{\includegraphics{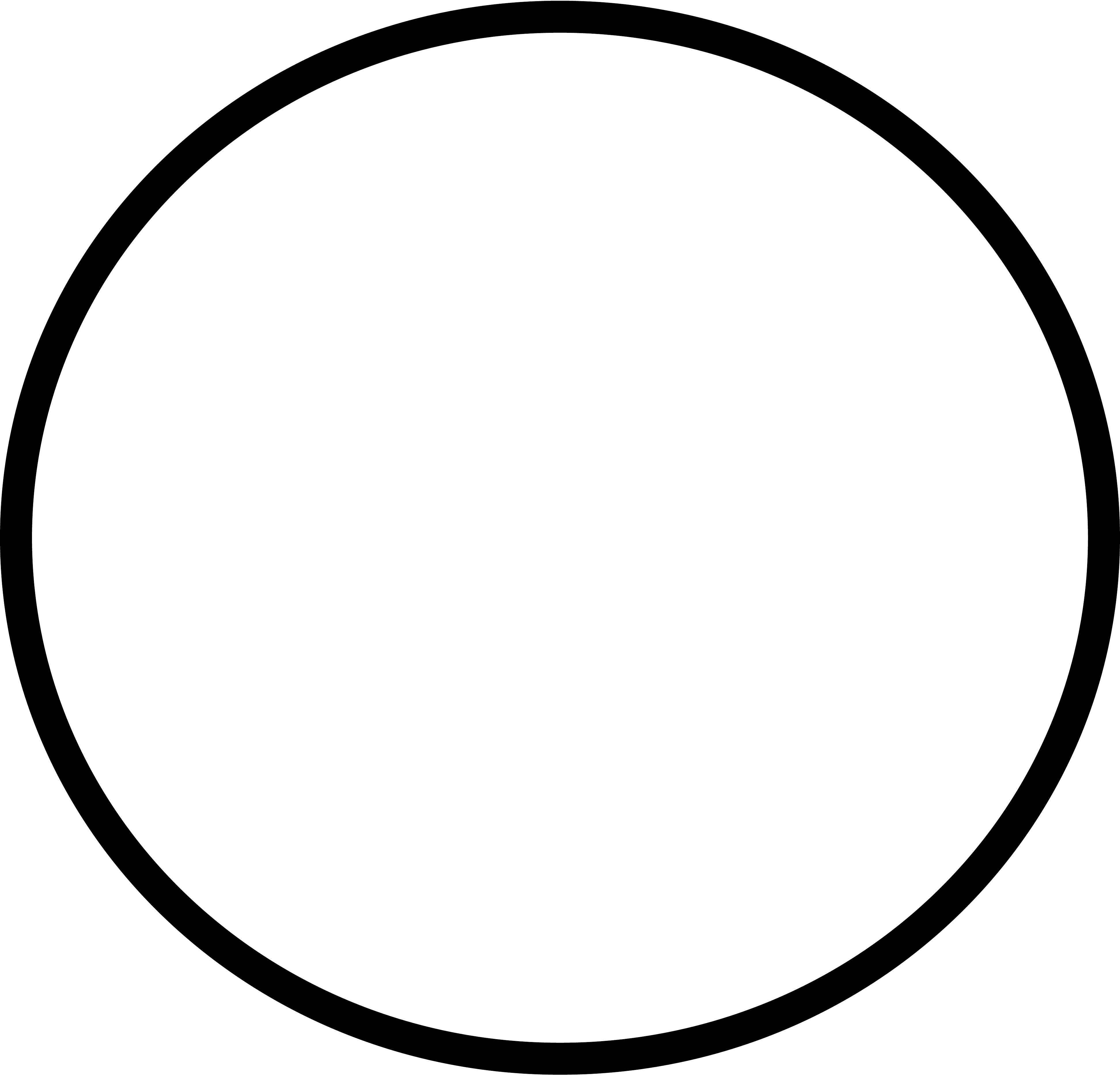}}
    \end{minipage}
    consists of a link $L$ in $F$ and a disjoint simple connected curve  
    \begin{minipage}[h]{0.05\linewidth}
        \vspace{0pt}
 	    \scalebox{0.02}{\includegraphics{simple-circle}}
    \end{minipage} that is null-homotopic in $F$.
\end{definition}    
The linear skein space is also called \textit{the Kauffman bracket skein module}~\cites{Przytycki, RT}.
The two main linear skein spaces needed in this paper are the linear skein of the 2-sphere and the linear skein of the disk with some marked points on the boundary.
The linear skein module of the sphere $S^2$ is isomorphic to the ring $\mathbb{Q}(A)$.
To describe the linear skein space of the disk with boundary and marked points, first let $E=I\times I$ where $I=[0,1]$ and then fix $2n$ marked points on the boundary of $E$, with precisely $n$ points on the top and $n$ points on the bottom of $E$.
We then denote by $\mathcal{S}(E,2n)$ the linear skein module of the disk $E$ with $2n$ marked points.
We make this into an associative algebra over $\mathbb{Q}(A)$ by the natural vertical juxtaposition of diagrams known as the \textit{$n^{th}$ Temperley-Lieb algebra} $TL_n$.
The \textit{Jones-Wenzl idempotent} (projector), denoted $f^{(n)}$, is an idempotent in $TL_n$.
The graphical depiction for this projector appears as a box labelled with one strand entering the box from one side and one strand leaving the box from the other side of the box.
The label $n$ is usually drawn next to the box to indicate label of the projector. 

The projector can be characterized completely by the first two axioms in~\ref{propertiesJ}, with the latter two relations following as a consequence.
For more information about these important idempotents, including a useful recursive relation, we recommend Wenzl's critical paper~\cite{Wenzl}.
\begin{eqnarray}
\label{propertiesJ}
    \hspace{-15 mm}
    \begin{minipage}[h]{0.21\linewidth}
        \vspace{0pt}
        \scalebox{0.115}{\includegraphics{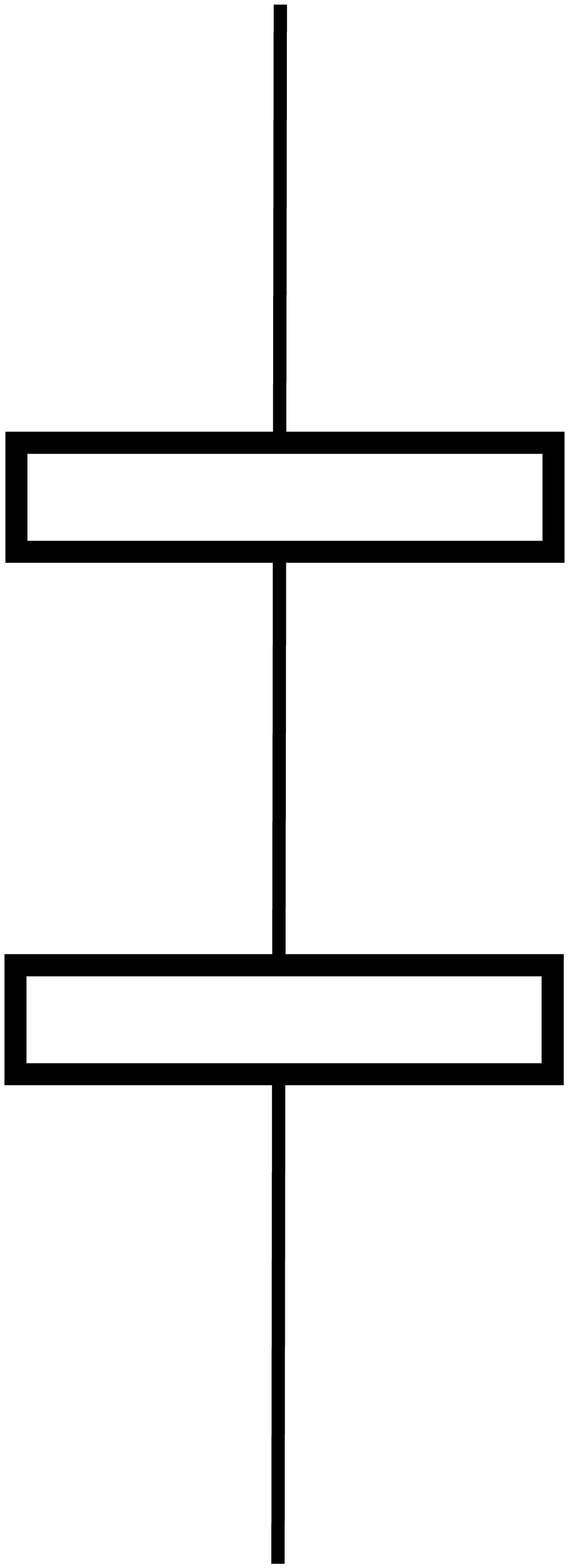}}
        \put(0,+80){\footnotesize{$n$}}
    \end{minipage}
     = \hspace{5pt}
    \begin{minipage}[h]{0.1\linewidth}
        \vspace{0pt}
        \hspace{100pt}
        \scalebox{0.115}{\includegraphics{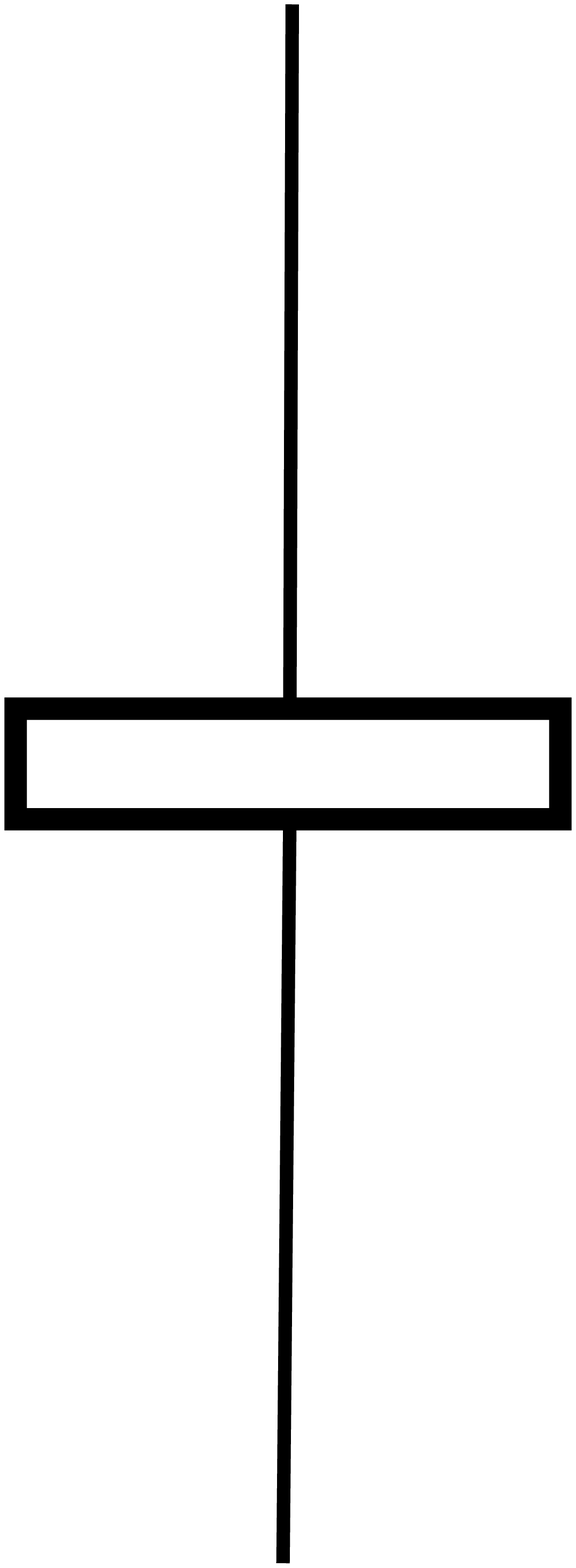}}
        \put(-60,80){\footnotesize{$n$}}
    \end{minipage}
    , \hspace{13 mm}
    \begin{minipage}[h]{0.09\linewidth}
        \vspace{0pt}
        \scalebox{0.115}{\includegraphics{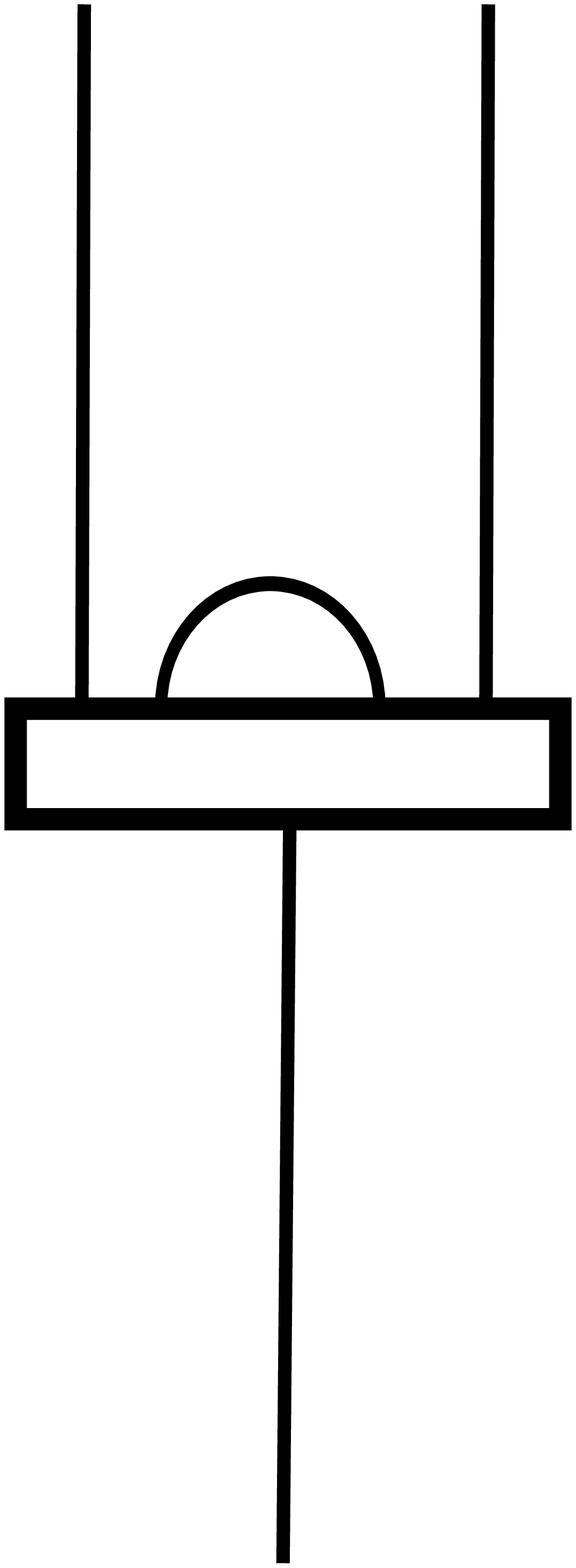}}
        \put(-70,+82){\footnotesize{$n-i-2$}}
        \put(-20,+64){\footnotesize{$1$}}
        \put(-2,+82){\footnotesize{$i$}}
        \put(-28,20){\footnotesize{$n$}}
    \end{minipage}
    =0\;\; \mathrm{and}
    \label{AX}
    \hspace{6 pt}
    \label{propertiesW}
    \hspace{0 mm}
    \begin{minipage}[h]{0.1\linewidth}
        \vspace{0pt}
        \scalebox{0.12}{\includegraphics{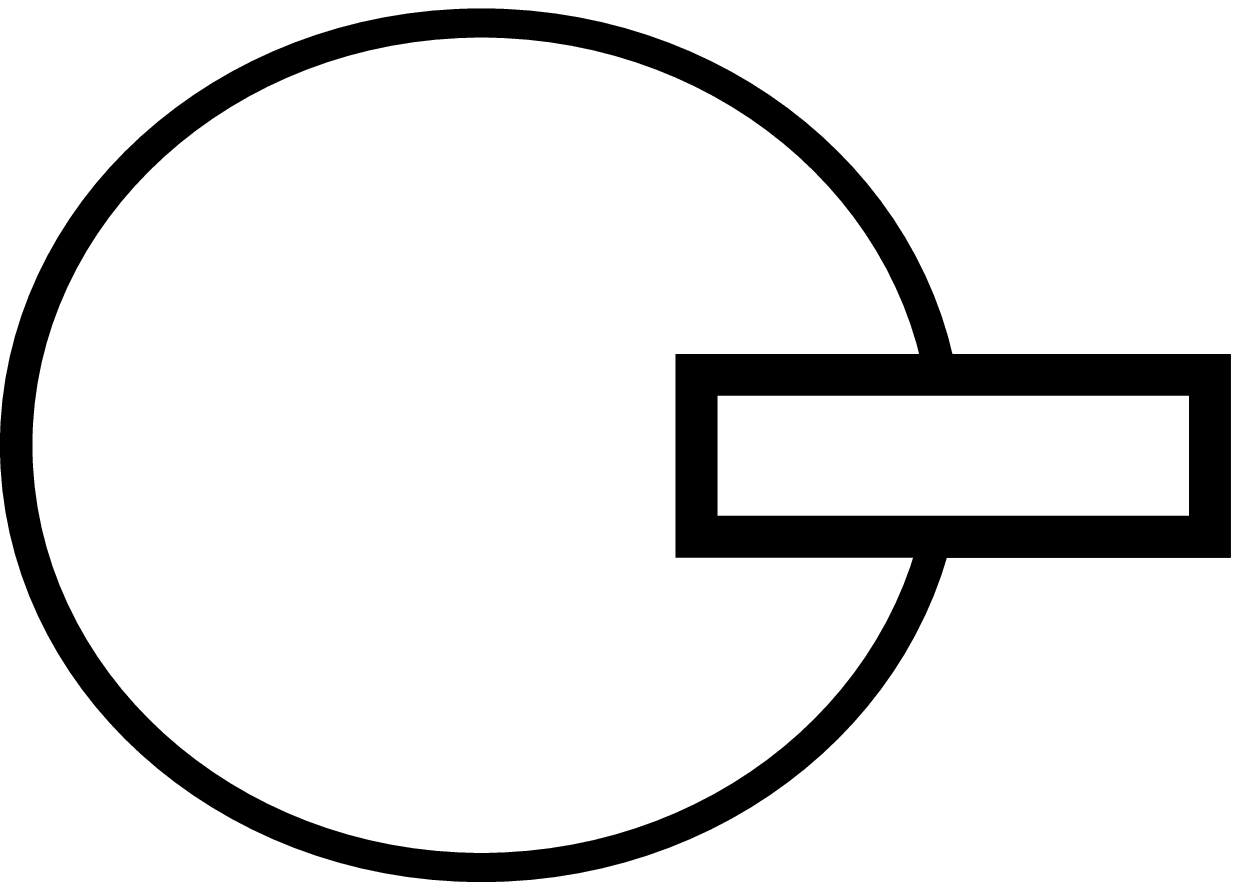}}
        \put(-29,+34){\footnotesize{$n$}}
    \end{minipage}=\Delta_{n}
    , \hspace{12 pt}
    \begin{minipage}[h]{0.08\linewidth}
        \vspace{0pt}
        \scalebox{0.115}{\includegraphics{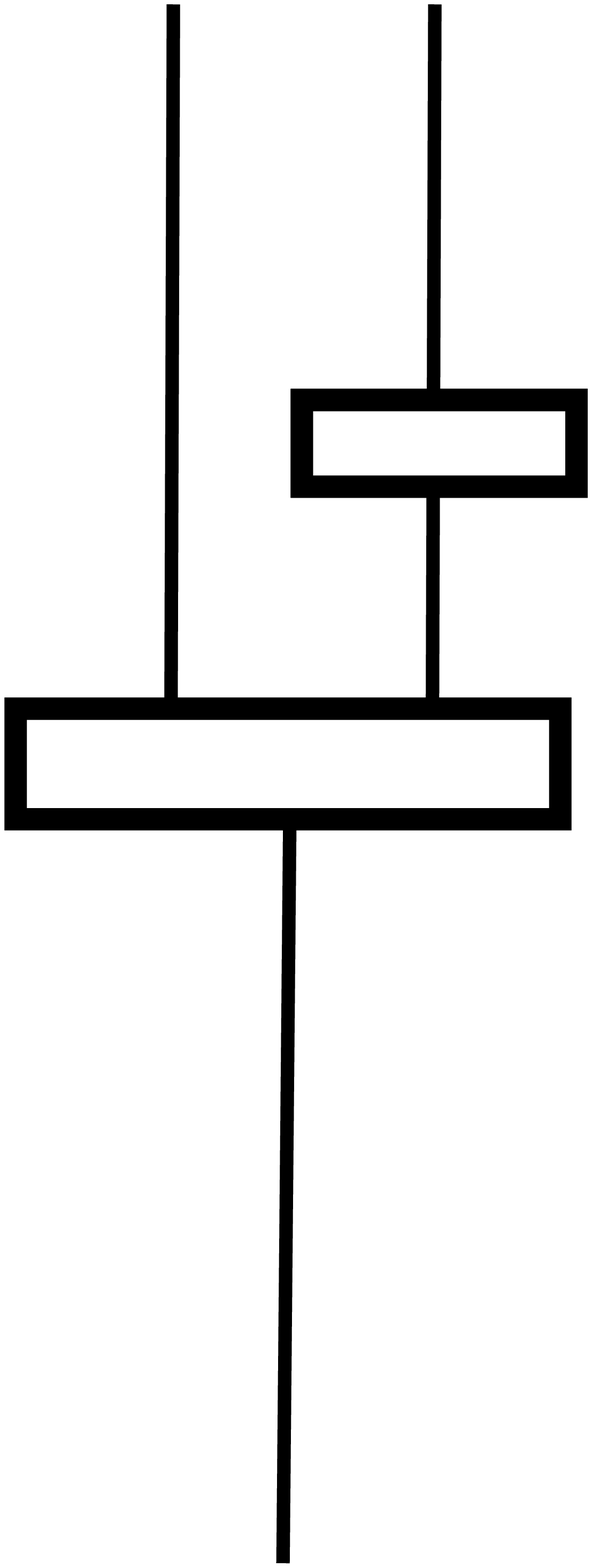}}
        \put(-34,+82){\footnotesize{$n$}}
        \put(-19,+82){\footnotesize{$m$}}
        \put(-46,20){\footnotesize{$m+n$}}
    \end{minipage}
    =\hspace{5pt}
    \begin{minipage}[h]{0.1\linewidth}
        \hspace{10pt}
        \scalebox{0.115}{\includegraphics{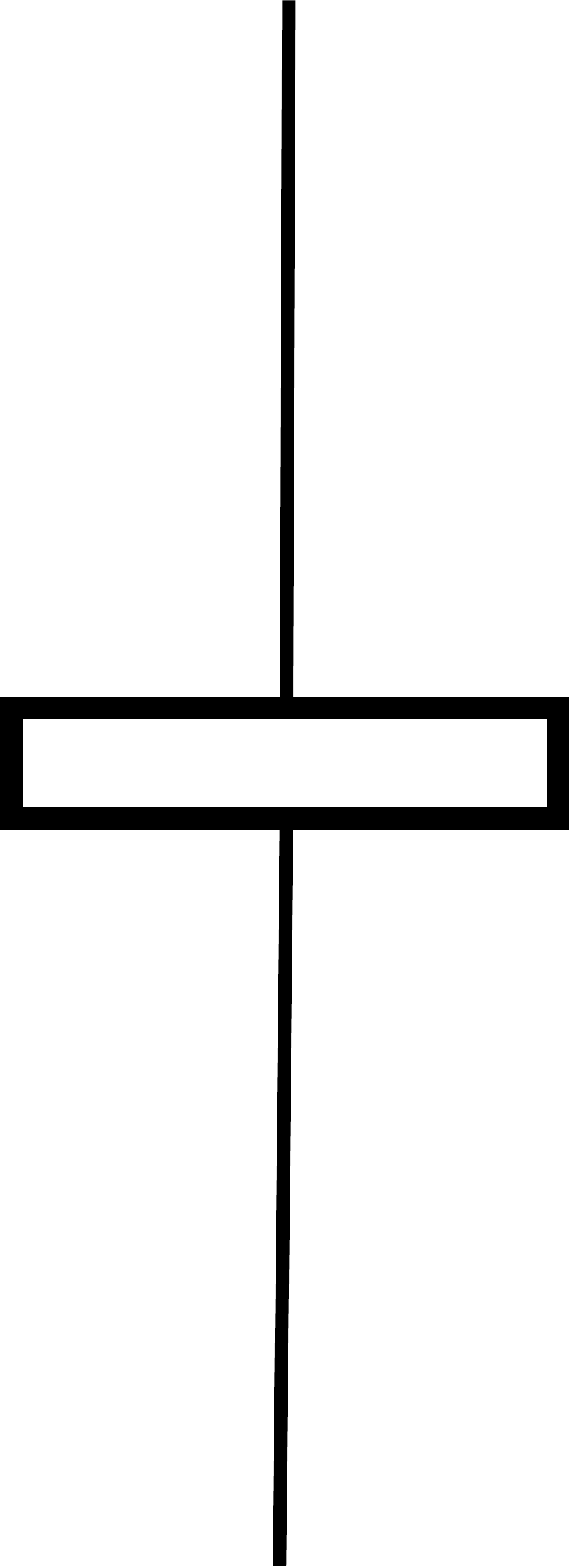}}
        \put(-9,82){\footnotesize{$m+n$}}
    \end{minipage}
\end{eqnarray}
where
\begin{equation*}
    \Delta_{n}=(-1)^{n}\left(\frac{q^{(n+1)/2}-q^{-(n+1)/2}}{q^{1/2}-q^{-1/2}}\right).
\end{equation*}
and the element $\Delta_{n}$ is related to the $(n+1)^{th}$ quantum integer, denoted by $[n+1]_q$, via the equation $\Delta_{n}=(-1)^n [n+1]_q$.

We use this idempotent $f^{(n)}$ to define useful submodules of the Kauffman bracket skein module of the disk with marked points on its boundary as follows.
Let $E$ be a disk with $m$ marked points on its boundary.
Partitioning this set of points into $k$ clusters of $s_i$ ($1\leq i \leq k$) marked points each, with $m=s_1+s_2+\ldots+s_k$, we consider the skein submodule of the skein module of the disk with $m$ marked points obtained by placing idempotents $f^{(s_i)}$ on each cluster of $s_i$ points following clockwise around the disk.
The submodule is thus obtained by taking any diagram $D$ in $\mathcal{S}(E,m)$ and mapping it into the same diagram with the idempotents, $f^{(s_i)}$, placed on the clusters of marked points $s_i$.
We will denote this skein module by $Y_{s_1,\ldots,s_k}$.
Figure~\ref{example of new element} illustrates an example of elements in the skein module of $\mathcal{S}(E,12)$ mapping to elements in the submodule $Y_{4,3,4,1}$.

\begin{figure}[h]
    \centering
    {\includegraphics[scale=0.15]{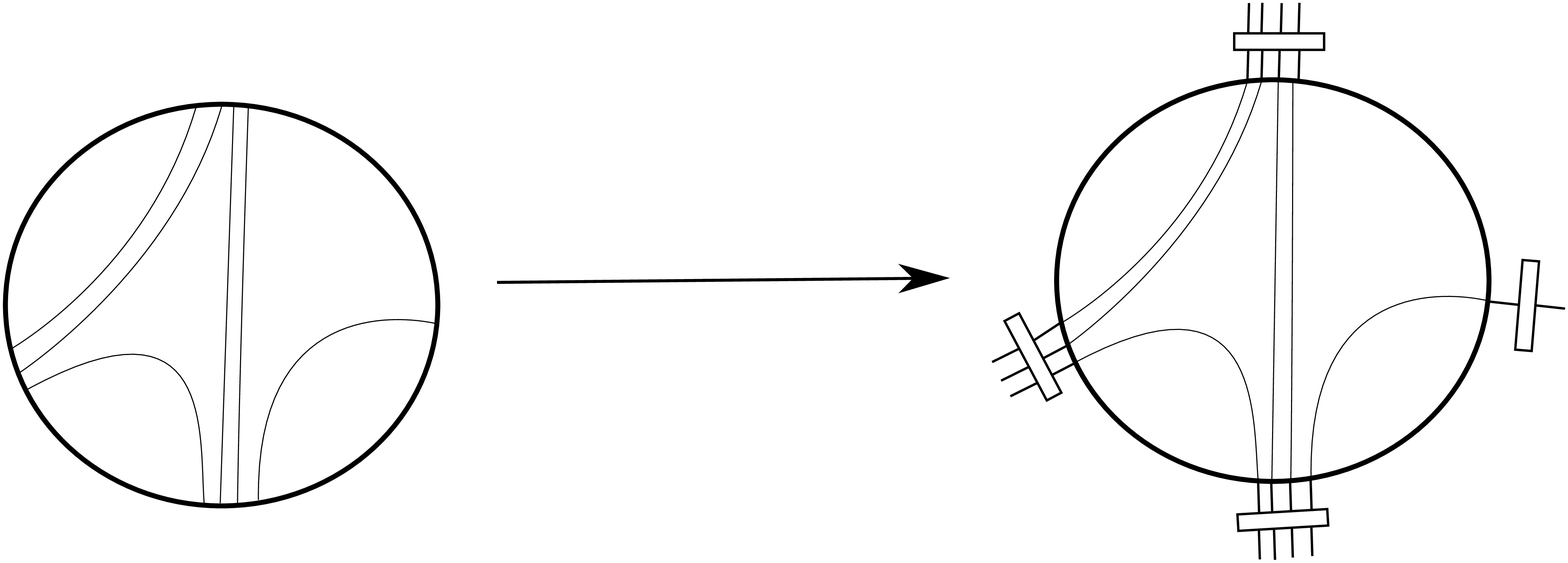}
        \caption{ An element in the skein module of the disk with $4+3+4+1$ marked points on the boundary and the corresponding element in the space $Y_{4,3,4,1}.$ }
    \label{example of new element}}
\end{figure}

The space $Y_a$ is zero dimensional as is $Y_{a,b}$ when $a \neq b$, while $Y_{a,b}$ is one dimensional and generated by $f^{(a)}$ when $a=b$. This follows from the basic properties of the idempotent in~\ref{propertiesJ}. Similarly, the space $Y_{a,b,c}$ is either zero dimensional or one dimensional. It is one dimensional when $a+b+c$ is even and $a+b \geq c \geq |a-b|$. Such a triple $(a,b,c)$ is called \textit{admissible}. When $(a,b,c)$ is admissible the space $Y_{a,b,c}$ is generated by the skein element $\tau_{a,b,c}$ in Figure $\ref{taw1}$. This element exists if and only if the following three equations are satisfied:
\begin{equation}
\label{threeequations}
    a=x+y,\hspace{6pt} b=x+z,\hspace{6pt} c=y+z\quad\mathrm{for}\; x,y,z\in\mathbb{N}.
\end{equation}
\begin{figure}[h]
    \centering
    {\includegraphics[scale=0.25]{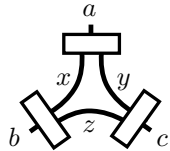}
        \small{
            \put(-58,2){$b$}
            \put(-30,51){$a$}
            \put(-40,25){$x$}
            \put(-17,25){$y$}
            \put(-30,7){$z$}
            \put(-2,2){$c$}}
        \caption{The skein element $\tau_{a,b,c}$ in the space $Y_{a,b,c\text{ }}$ }
    \label{taw1}}
\end{figure}


\section{Quantum Spin Networks}
\label{QSNet}

A \textit{quantum spin network} (QSN) is a planar trivalent graph with edges labeled by non-negative integers.
A zero-labeled edge corresponds to deleting that edge.
We say that a QSN is \textit{admissible} if the three labels at every vertex satisfy the admissibility conditions~\ref{threeequations}, otherwise it is \textit{inadmissible}. 
See Figure~\ref{QSN} for an example and non-example of an admissible quantum spin network.
\begin{figure}[h]
    \centering
    {\includegraphics[scale=0.15]{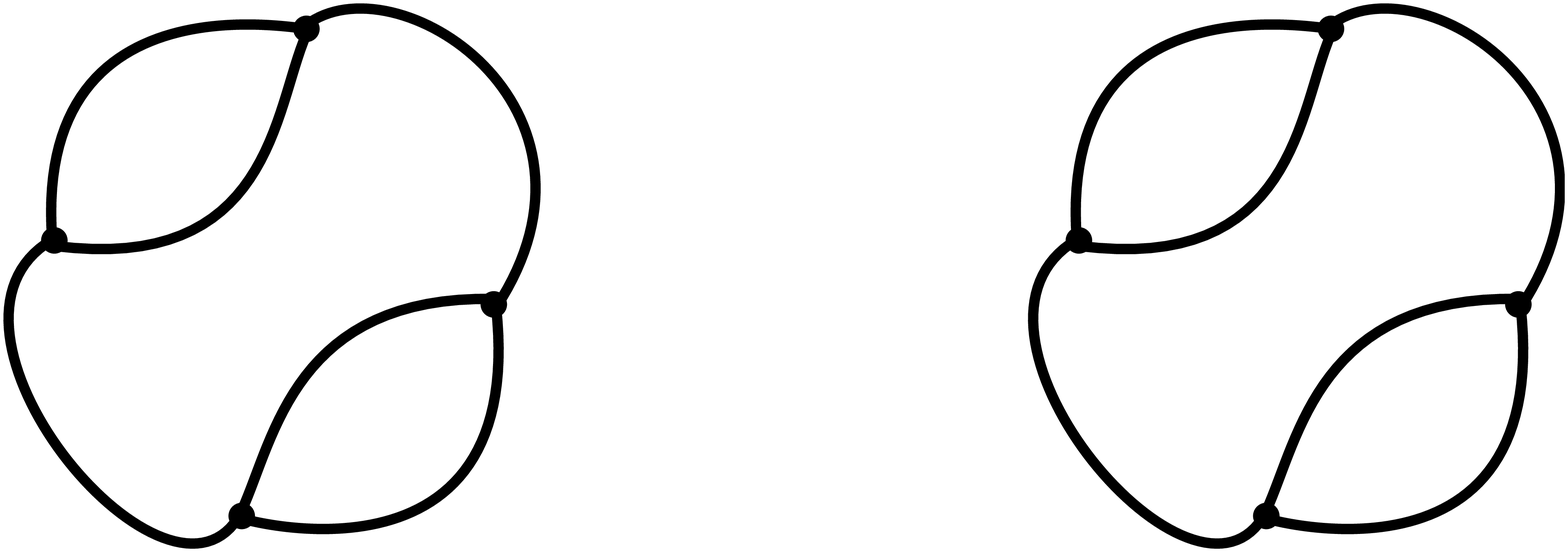}
        \small{
            \put(-60,5){$4$}
            \put(-41,58){$3$}
            \put(-41,37){$3$}
            \put(-10,35){$2$}
            \put(-32,19){$5$}
            \put(-21,5){$3$}}
            \put(-170,5){$5$}
            \put(-149,58){$3$}
            \put(-149,37){$1$}
            \put(-118,35){$3$}
            \put(-141,19){$4$}
            \put(-131,5){$8$}
        \caption{On the left, an example of a inadmissible quantum spin network. On the right, an example of an admissible quantum spin network.}
    \label{QSN}}
\end{figure}

 If $D$ is a quantum spin network in $S^2$ then the \textit{Kauffman bracket evaluation of $D$}, denoted $\left\langle D\right\rangle$ where $\left\langle \cdot \right\rangle:\mathcal{S}(S^2)\to\mathbb{Q}(A)$ is defined to be the evaluation of $D$ as an element in $\mathcal{S}(S^{2})$ after replacing any edge colored $n$ by $f^{(n)}$ and each vertex colored $(a,b,c)$ by the skein element $\tau_{a,b,c}$, as in Figure~\ref{replace}.
 If $D$ is inadmissible then we define $\left\langle D\right\rangle=0$.
 Often in this paper we will not distinguish between the QSN in $S^2$ and its evaluation as a linear skein of $S^2$.
 Finally, when we work with QSNs in other skein modules that are not necessarily $S^2$, one often needs to switch between the QSN and the corresponding skein element as illustrated in Figure~\ref{replace}.
 We will denote by $\left\langle D \right\rangle$ to skein element that corresponds to the quantum spin network $D$.
 
\begin{figure}[h]
    \centering
    {\includegraphics[scale=0.17]{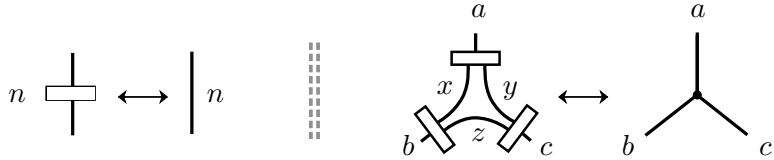}
        \put(-48,0){$b$}
        \put(-22,52){$a$}
        \put(-118,23){$x$}
        \put(-93,23){$y$}
        \put(-105,5){$z$}
        \put(4,0){$c$}
        \put(-131,0){$b$}
        \put(-105,52){$a$}
        \put(-79,0){$c$}
        \put(-280,20){$n$}
        \put(-205,20){$n$}
        \caption{The evaluation of a quantum spin network in the linear skein of $S^2$ is obtained by the above local replacement rules.}
    \label{replace}}
\end{figure}


\section{The Tail of Quantum Spin Networks}
\label{TQSN}

Let $D$ be a planar trivalent graph in $S^2$.
We label every edge in $D$ by $n$ or $2n$ where $n \in \mathbb{N}$ such that we obtain an admissible quantum spin network $D_n$.
This way we construct a sequence of quantum spin networks $\mathcal{D}=\{D_n\}_{n \in \mathbb{N}}$. In this paper we are concerned with the stability properties associated to the coefficients of such a sequence $\mathcal{D}$.  
We will show that if the sequence of quantum spin networks $\mathcal{D}$ satisfies a natural condition, then the first $n$ coefficients of $\left\langle D_n\right\rangle$ are identical to the first $n$ coefficients of $\left\langle D_{n+1}\right\rangle$ up to a common sign.
This stability gives rise to a $q$-series called the \textit{tail} of the quantum spin network $\mathcal{D}$.
The tail 
of such sequences were studied by the second author in~\cite{Hajij1}.
An investigation of QSNs follows organically from considering the tail of the colored Jones polynomial (see for example~\cite{Armond1}).
The relationship between the colored Jones polynomial and Rogers-Ramanujan identities is also studied in~\cite{CodyOliver}.
In this section we constrain ourselves to study the stability of admissible QSNs whose edges are all colored $2n$ and show that in this case the tail always exists.
 
Let $P_1(q)$ and $P_2(q)$ be non-zero power series in $\mathbb{Z}[q^{-1}][[q]]$.
For a positive integer $n$, we say that $P_1$ and $P_2$ are \textit{$n$-equivalent} and write $P_1(q)\doteq_n P_2(q)$, if their first $n$ coefficients agree up to a common sign.
For instance, $-q^{-4} + 15q^{-3} - 6 + 11 q \doteq_5 1 - 15q + 6q^4$.
When $P_1(q) \doteq_n P_2(q)$ for every integer $n \geq 0$, we will simply write $P_1(q) \doteq P_2(q)$.
We will denote the minimal degree of an element $f \in  \mathbb{Z}[q^{-1}][[q]] $ by $deg(f)$.
We now give the definition of the tail of sequence of elements in $\mathbb{Z}[q^{-1}][[q]]$.
\begin{definition}
\label{df}
	Let $\mathcal{P}=\{P_n(q)\}_{n \in \mathbb{N}}$ be a sequence of formal power series in $\mathbb{Z}[q^{-1}][[q]]$.
	A tail (if it exists) of the sequence $\mathcal{P}$ is a formal power series $T_{\mathcal{P}}(q) $ in $\mathbb{Z}[q^{-1}][[q]]$ with:
	\begin{equation*}
	\label{main definition}
	    T_{\mathcal{P}}(q)\doteq_{n}P_n(q), \text{ for all } n \in \mathbb{N}.
	\end{equation*}
\end{definition}

\begin{remark}
    One can immediately see from this definition that the sequence $\mathcal{P}=\{P_n(q)\}_{n \in \mathbb{N}}$ admits a tail if and if only if $P_{n}(q)\doteq_{n}P_{n+1}(q)$ for all $n$.
\end{remark}

\begin{remark}
    A tail of a sequence $\mathcal{P}$ described in Definition~\ref{df}, when it exists, is not unique.
    Namely, if $\mathcal{P}=\{P_n(q)\}_{n \in \mathbb{N}}$ is a sequence with a tail $T^\prime(\mathcal{P})$, then  $ q^{a}\cdot T^\prime(\mathcal{P})$ will also be a tail of $\mathcal{P}$ for any $a \in \mathbb{Z}$.
    Given any tail $T^\prime(\mathcal{P})$ we can always choose an $a$ such that $deg(T(\mathcal{P}))=0$, when $T(\mathcal{P})= q^{a} \cdot T^\prime(\mathcal{P})$.
    By convention we will refer to \textit{the} tail of the sequence $\mathcal{P}$ as \textit{the} power series $T(\mathcal{P})$ satisfying this condition, noting that $T(\mathcal{P})\in\mathbb{Z}[[q]]$.
    However, in calculating the tail of a sequence, it will be convenient to consider tails of $\mathcal{P}$ where $deg(T^\prime(\mathcal{P}))\neq 0$.
\end{remark}

The evaluation of a quantum spin network in $\mathcal{S}(S^2)$ yields an rational function in $\mathbb{Q}(A)$. Following Armond \cite{Armond1} we need to express a rational function as Laurant series in such a way that this Laurent series has a minimum degree. 
In other words every element in $\mathbb{Q}(A)$ is identified with a unique element in $\mathbb{Z}[A^{-1}][[A]]$. Hence the evaluation of any admissible quantum spin network in $\mathcal{S}(S^2)$ can be uniquely identified with a power series in $\mathbb{Z}[A^{-1}][[A]]$. We will use this identification to apply the previous equivalence relation, $\doteq_{n}$, to quantum spin networks. We will call this element in $\mathbb{Z}[A^{-1}][[A]]$  \textit{the power series evaluation of a quantum spin network}.  Moreover when working with the tail of such networks one often uses the variable $q$ instead of the variable $A$ (recall that $q=A^4$). In this case the power series evaluation of a quantum spin network is normalized, by multiplying by a $q^{a}$ for some power $a$, so that final power series is an element in $\mathbb{Z}[q^{-1}][[q]]$. We will assume this identification and normalization in what follows.

Now let $D$ be a trivalent graph and denote by $E(D)$ its edge set.
Let $\mathcal{F}=\{f_n:E(D)\longrightarrow \mathbb{N}\}_{n\in \mathbb{N}} $ be a sequence of label assignments on the edges of $D$ such that the resulting QSN is admissible for every $n \in \mathbb{N}$.
We will denote a quantum spin network $D$ labeled with $f_n$ by $D_{f_{n}}$ or simply by $D_n$ (when there is no confusion).

The tail of the sequence $\{D_n\}_{n \in \mathbb{N}}$, denoted by $T(D)$, is a series in $\mathbb{Z}[q^{-1}][[q]]$ that is $n$-equivalent to $D_n$ for each $n \in \mathbb{N}$.

The following two lemmas have straightforward proofs and will be useful when trying to compute tails.
\begin{lemma}
\label{easy}
    Let $P_1,P_2,Q_1,Q_2$ be non-zero power series in $\mathbb{Z}[q^{-1}][[q]]$ with $P_1(q)  \doteq_n P_2 (q) $ and $Q_1(q)  \doteq_n Q_2 (q) $, then
    \begin{enumerate}
        \item $P_1Q_1 \doteq_n P_2Q_2$.
        \item If $deg(Q_1) =deg(P_1) +a $ where $a > n$, then $P_1 \pm Q_1 \doteq_n P_1$.
        \item If $deg(P_1) =deg(P_2)$ and $deg(Q_1) =deg(Q_2)$, then $P_1+Q_1 \doteq_n  P_2+Q_2.$ 
    \end{enumerate}
\end{lemma}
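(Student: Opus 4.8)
The plan is to fix coordinates on each series and reduce all three statements to elementary bookkeeping on coefficient sequences. Write each nonzero series as $P_j = \sum_{i \ge 0} a^{(j)}_i\, q^{\deg(P_j)+i}$ with $a^{(j)}_0 \neq 0$, and $Q_j = \sum_{i \ge 0} b^{(j)}_i\, q^{\deg(Q_j)+i}$ with $b^{(j)}_0 \neq 0$, for $j=1,2$. Then the ``first $n$ coefficients'' of $P_j$ are exactly $a^{(j)}_0,\dots,a^{(j)}_{n-1}$, read off from the minimal degree, and $P_1 \doteq_n P_2$ means there is a sign $\varepsilon_P \in \{\pm 1\}$ with $a^{(1)}_i = \varepsilon_P\, a^{(2)}_i$ for $0 \le i \le n-1$; a sign $\varepsilon_Q$ similarly encodes $Q_1 \doteq_n Q_2$. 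The whole lemma then becomes a statement about how these two finite sign-matched blocks behave under multiplication and addition.

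For part (1) I would compute the product directly. Writing $P_jQ_j = \sum_{k\ge 0} c^{(j)}_k\, q^{\deg(P_j)+\deg(Q_j)+k}$, the convolution $c^{(j)}_k = \sum_{i+\ell=k} a^{(j)}_i b^{(j)}_\ell$ for $k \le n-1$ involves only indices $i,\ell \le k \le n-1$, i.e.\ indices inside the matched blocks. Hence $c^{(1)}_k = \sum_{i+\ell = k} \varepsilon_P a^{(2)}_i\, \varepsilon_Q b^{(2)}_\ell = \varepsilon_P\varepsilon_Q\, c^{(2)}_k$ for $k \le n-1$. Since $\mathbb{Z}$ is an integral domain the leading coefficient $c^{(j)}_0 = a^{(j)}_0 b^{(j)}_0$ is nonzero, so $\deg(P_jQ_j) = \deg(P_j)+\deg(Q_j)$ and the block really is the genuine first $n$ coefficients; this gives $P_1Q_1 \doteq_n P_2Q_2$ with common sign $\varepsilon_P\varepsilon_Q$.

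Part (2) is a degree estimate. Because $\deg(Q_1) = \deg(P_1) + a$ with $a > n$, the series $Q_1$ contributes only to powers $q^{\deg(P_1)+k}$ with $k \ge a \ge n+1$. Thus for every $k \le n-1$ the coefficient of $q^{\deg(P_1)+k}$ in $P_1 \pm Q_1$ equals $a^{(1)}_k$, the corresponding coefficient of $P_1$; in particular the leading coefficient $a^{(1)}_0$ is unchanged, so $\deg(P_1 \pm Q_1) = \deg(P_1)$ and the first $n$ coefficients coincide, giving $P_1 \pm Q_1 \doteq_n P_1$ with sign $+1$.

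Part (3) is the delicate one, and I would organize it by comparing $d := \deg(P_1)=\deg(P_2)$ with $e := \deg(Q_1)=\deg(Q_2)$. If $d \neq e$, say $d < e$, then the low-order coefficients of both sums come from the $P$'s alone, so there is no cancellation at degree $d$ and $\deg(P_1+Q_1)=\deg(P_2+Q_2)=d$; matching the coefficient of $q^{d+k}$ for $k \le n-1$ uses $a^{(1)}_k = \varepsilon_P a^{(2)}_k$ together with $b^{(1)}_{k-(e-d)} = \varepsilon_Q b^{(2)}_{k-(e-d)}$ (the latter only when that index is nonnegative, in which case it is $< n$). The decisive point, and the main obstacle, is that pulling out a \emph{single} common sign from $P_1+Q_1$ forces the two summands to carry the same sign: unless $\varepsilon_P = \varepsilon_Q$ no common sign exists. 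I would therefore carry the compatibility $\varepsilon_P = \varepsilon_Q =: \varepsilon$ (automatic in our applications, where all series in play inherit one overall sign from the common index $n$), under which every compared coefficient of $P_1+Q_1$ equals $\varepsilon$ times that of $P_2+Q_2$. The remaining case $d = e$ is then the coefficientwise identity $a^{(1)}_i + b^{(1)}_i = \varepsilon\,(a^{(2)}_i + b^{(2)}_i)$ for $i \le n-1$; here the only genuinely non-formal step is checking that leading cancellation cannot desynchronize the two minimal degrees, which amounts to observing that the first nonvanishing coefficient of $P_1+Q_1$ occurs within the matched block (equivalently, the first $n$ coefficients of the sum do not all cancel), so that the equal minimal degrees of $P_1+Q_1$ and $P_2+Q_2$ are detected inside the block. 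This cancellation-versus-minimal-degree bookkeeping in the equal-degree case is the one place where care is required.
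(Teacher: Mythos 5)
Your coefficient-block formalization of $\doteq_n$ matches the paper's intended reading (the paper itself omits the proof, calling it straightforward), and your arguments for parts (1) and (2) are correct and complete: the convolution computation together with the fact that $\mathbb{Z}$ has no zero divisors settles (1), and in (2) the hypothesis $a>n$ keeps every term of $Q_1$ outside the compared window. Your flag on part (3) is also legitimate: as literally stated it fails without the sign compatibility $\varepsilon_P=\varepsilon_Q$; for example $P_1=1+q$, $P_2=-1-q$, $Q_1=Q_2=1$ satisfy all the stated hypotheses, yet $P_1+Q_1=2+q$ and $P_2+Q_2=-q$ are not even $\doteq_1$. Making that compatibility explicit is the right repair, and it is harmless in the paper's applications.

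However, your resolution of the cancellation problem in the equal-degree case of (3) has a genuine gap: the condition you isolate --- that the first nonvanishing coefficient of $P_1+Q_1$ occurs within the matched block --- is not sufficient. Take $n=3$ and $P_1=1+q+q^2+q^3$, $P_2=1+q+q^2+2q^3$, $Q_1=-1+q^2+q^3$, $Q_2=-1+q^2+3q^3$. All hypotheses hold with $\varepsilon_P=\varepsilon_Q=+1$ and all four minimal degrees equal to $0$; the leading cancellation occurs at offset $k=1$, well inside the block, so the minimal degrees of the two sums do stay synchronized. Nevertheless $P_1+Q_1=q+2q^2+2q^3$ and $P_2+Q_2=q+2q^2+5q^3$ are only $\doteq_2$, not $\doteq_3$. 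The reason is that a drop of the minimal degree by $k\ge 1$ shifts the window of ``first $n$ coefficients'' of each sum to offsets $k,\dots,k+n-1$, and the hypotheses say nothing about the last $k$ of these; in general one only gets $P_1+Q_1\doteq_{n-k}P_2+Q_2$. The hypothesis that actually closes the argument (and that the paper uses implicitly, since in its applications the relevant leading terms never cancel) is that there is no cancellation of the leading coefficient at all, i.e.\ $\deg(P_1+Q_1)=\min\bigl(\deg(P_1),\deg(Q_1)\bigr)$ --- automatic when $\deg(P_1)\neq\deg(Q_1)$, and equivalent to $a^{(1)}_0+b^{(1)}_0\neq 0$ when the degrees coincide. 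Under that condition, together with $\varepsilon_P=\varepsilon_Q$, your coefficientwise identity does prove (3); without it the statement is false.
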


Let $R$ be an element in $\mathbb{Q}(q)$ of the form $1/P$ where $P$ is a Laurent polynomial.
Then as we mentioned earlier we can write $R$ as an element in $\mathbb{Z}[q^{-1}][[q]]$.
Using this convention and part (1) of \ref{easy} we also obtain the following Lemma.
\begin{lemma}
\label{easy2}
    Let $P_1,Q_1,Q_2$ be non-zero power series in $\mathbb{Z}[q^{-1}][[q]]$ and let $P_2= 1/P \in  \mathbb{Q}(q)$ for $P \in \mathbb{Z}[q,q^{-1}] $.
    Furthermore, suppose that  $P_1(q)  \doteq_n P_2 (q) $ and $Q_1(q)  \doteq_n Q_2 (q) $, then  $P_1Q_1 \doteq_n P_2Q_2$.
\end{lemma}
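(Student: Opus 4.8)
The plan is to deduce the claim directly from part~(1) of Lemma~\ref{easy}; the sole new feature here is that one factor, $P_2$, is handed to us as $1/P$ rather than as an explicit power series, so the real work is to certify that this quotient is a genuine non-zero element of $\mathbb{Z}[q^{-1}][[q]]$ on which the relation $\doteq_n$ is defined.

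First I would make the identification of $P_2=1/P$ explicit. Writing $P=q^{d}\,u(q)$ with $d=\deg(P)$ and $u(q)=c_0+c_1q+\cdots$, $c_0\neq 0$, the formal inverse is
\[
P_2=\frac{1}{P}=q^{-d}c_0^{-1}\bigl(1+(c_1/c_0)q+\cdots\bigr)^{-1},
\]
which expands, by the usual geometric series, as a Laurent series of minimal degree $-d$ with non-zero leading coefficient $c_0^{-1}$. In particular $P_2$ is a non-zero element of the series ring; under the identification adopted above (in the applications the extremal coefficient $c_0$ is a unit, as for the quantum integers coming from the $\Delta_n$ that appear later) its coefficients are integers, so $P_2\in\mathbb{Z}[q^{-1}][[q]]$ and $\doteq_n$ makes sense on it.

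With $P_2$ realized as a non-zero power series in $\mathbb{Z}[q^{-1}][[q]]$, the hypotheses $P_1\doteq_nP_2$ and $Q_1\doteq_nQ_2$ are precisely those of part~(1) of Lemma~\ref{easy} applied to the four non-zero series $P_1,P_2,Q_1,Q_2$. The mechanism behind that part is that the coefficient of $q^{\deg+\ell}$ in a product of two Laurent series depends only on the first $\ell+1$ coefficients of each factor; hence if the first $n$ coefficients of the factors agree up to signs $\varepsilon_P$ and $\varepsilon_Q$, the first $n$ coefficients of the products agree up to the common sign $\varepsilon_P\varepsilon_Q$. Invoking Lemma~\ref{easy}(1) then yields $P_1Q_1\doteq_nP_2Q_2$ at once.

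The only place any care is needed is the first step: one must check that the minimal-degree Laurent expansion of $1/P$ is well defined and lands in the integer series ring, so that it is a legitimate input to $\doteq_n$. Once this identification is in place, the statement is a formal consequence of Lemma~\ref{easy}(1) and requires no further computation.
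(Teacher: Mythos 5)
Your proof is correct and takes essentially the same route as the paper: the paper's entire justification is to invoke its standing convention that $1/P$ is identified with its minimal-degree Laurent series expansion and then to cite part (1) of Lemma~\ref{easy}, which is exactly your argument. Your explicit geometric-series expansion of $1/P$ and the remark about integrality of its coefficients merely make precise a point the paper leaves implicit in that convention.
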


\subsection{Adequate Quantum Spin Networks}

Let $D$ be a skein element in $\mathcal{S}(S^2)$ consisting of arcs and circles labeled by Jones-Wenzl idempotents of color $n$ or $2n$.
Let $\overline{D}$ denote the diagram obtained from $D$ by replacing each $n$-labeled arc with the idempotent $f^{(n)}$ by $n$ parallel arcs passing under it.
We say that the skein $D$ is adequate if in $\overline{D}$ each continuous arc passes at most once under any idempotent $f^{(n)}$.
Figures~\ref{smoothings1} and~\ref{adequate} show examples of adequate and non-adequate skein elements.
Each circle in the Figures bounds a disk, a result of our restriction of $f_n(e)\in\{n,2n\}$.
In Figure~\ref{smoothings1} each unlabeled arc represents $n$ parallel strands.
On the left, each strand passes multiple idempotents, but only passes under any individual idempotent once, while on the right, each arc passes under each idempotent twice.
A quantum spin network is adequate if its corresponding skein element is adequate, hence any adequate QSN must also be admissible.
The following theorem is due to~\cites{Armond1,Hajij2}.

\begin{figure}[h]
    \centering
	{\includegraphics[scale=0.15]{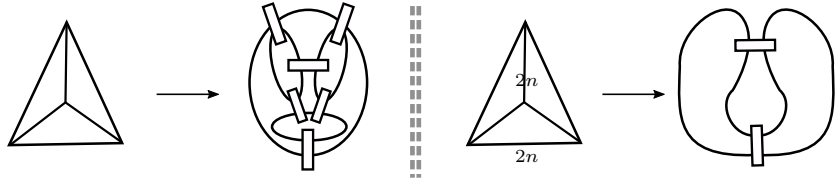}
		\put(-121,+6){\tiny{$2n$}}
		\put(-121,+35){\tiny{$2n$}}
		\caption{ All arcs in the skein elements are labeled by $n$.
		    Left: An adequate quantum spin network (all edges in the graph have label $2n$).
		    Right: A non-adequate quantum spin network (all non-labeled edges in the graph have label $n$).}
	\label{smoothings1}}
\end{figure}

\begin{theorem}
\label{mustafa_cody_theorem}
	Let $D$ be a trivalent graph, with a sequence of label assignment functions
	\begin{equation*}
        \mathcal{F}=\left\{f_n:E(D)\to \mathbb{N} \mid f_n(e) \text{ is equal to } n \text{ or } 2n  \mathrm{\ for\ all\ } e \in E(D) \right\}_{n\in \mathbb{N}}
	\end{equation*}
    on the edges of $D$ such that $D_{f_n}$ is an adequate quantum spin network for every $n$.
    Then the tail of $\left\{D_{f_n}\right\}_{n\in \mathbb{N}}$ exists.
\end{theorem}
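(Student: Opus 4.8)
The plan is to establish the tail's existence by proving the stability statement $\langle D_{f_n}\rangle \doteq_n \langle D_{f_{n+1}}\rangle$ directly, using the adequacy hypothesis to control the lowest-degree behavior of the Kauffman bracket evaluation. By the Remark following Definition~\ref{df}, a sequence admits a tail exactly when consecutive terms are $n$-equivalent, so it suffices to show that the first $n$ coefficients of $\langle D_{f_n}\rangle$ and $\langle D_{f_{n+1}}\rangle$ agree up to a common sign. The central idea is to expand each Jones-Wenzl idempotent $f^{(n)}$ (or $f^{(2n)}$) inside $\langle D_{f_n}\rangle$ as a sum of Temperley-Lieb diagrams: the identity diagram plus terms containing at least one ``turnback'' (a cap-cup). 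Since $q = A^4$, I would track the minimal $A$-degree, or equivalently the minimal $q$-degree, contributed by each resolution.

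First I would pass from the QSN to its associated skein element $\overline{D}$, replacing each idempotent by $n$ parallel strands (as in the definition of adequacy) and each edge-crossing region by its Kauffman resolutions using relation~(1) of Definition~\ref{SkeinDef}. The adequacy hypothesis guarantees that in $\overline{D}$ each continuous arc passes under any single idempotent at most once; this is precisely the condition ensuring that the all-$B$ (or all-$A$) smoothing produces the maximal number of disjoint null-homotopic circles on $S^2$, so that the extreme-degree term of $\langle D_{f_n}\rangle$ comes from a unique, well-understood resolution and is not cancelled. I would invoke the standard principle (from Armond's work~\cite{Armond1} and Lemma~\ref{easy}) that a single turnback in an idempotent raises the minimal degree: any TL diagram in the expansion of $f^{(n)}$ other than the identity contributes terms whose degree exceeds that of the identity term by a gap that grows linearly in $n$.

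The key comparison step is to relate $\langle D_{f_{n+1}}\rangle$ to $\langle D_{f_n}\rangle$ by stripping off the ``outermost'' strand of each idempotent. I would use Lemma~\ref{easy}, particularly parts (1) and (2): multiplicativity of $\doteq_n$ under products, and the fact that adding a term of degree exceeding $\deg(P_1)+n$ does not disturb the first $n$ coefficients. Writing $\langle D_{f_{n+1}}\rangle$ as the contribution of the identity resolution plus higher-degree corrections, adequacy ensures each correction term is separated from the leading term by a degree gap strictly larger than $n$, so those corrections are invisible modulo $\doteq_n$. Matching the leading (identity-resolution) contributions of the two consecutive networks up to the common sign coming from the $(-1)^n$ in $\Delta_n$ then yields $\langle D_{f_n}\rangle \doteq_n \langle D_{f_{n+1}}\rangle$.

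I expect the main obstacle to be the bookkeeping that makes ``the degree gap exceeds $n$'' precise and uniform across all vertices and edges of $D$ simultaneously. A single turnback in one idempotent is easy to control, but $D$ may have many edges and trivalent vertices, and one must verify that the cumulative lowest-degree term is genuinely unique and survives without cancellation across the whole network on $S^2$—this is exactly where adequacy must be used globally rather than locally. The cleanest route is to reduce to the local statement for one idempotent via the algebra structure of $TL_n$ and the product Lemma~\ref{easy}(1), but verifying that no cross-terms between distinct idempotents conspire to lower the degree or cancel the leading coefficient is the delicate point, and citing the adequacy-based degree estimates of~\cites{Armond1,Hajij2} is the most efficient way to discharge it.
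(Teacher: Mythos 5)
The first thing to note is that the paper does not prove this theorem at all: it is stated with the attribution ``The following theorem is due to~\cite{Armond1},~\cite{Hajij2}'' and is then used as a black box (for instance in the proof of Proposition~\ref{existance}). So there is no in-paper proof to compare yours against; the only meaningful comparison is with the arguments of Armond and Hajij, whose adequacy-based degree strategy your sketch does broadly follow.

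Judged as a self-contained proof, however, your proposal has a genuine gap, and it sits exactly where you yourself locate the ``delicate point.'' The entire content of the theorem is the uniform degree estimate that you assert but do not establish: that in the expansion of every Jones--Wenzl projector of $D_{f_n}$ into Temperley--Lieb diagrams, every non-identity (turnback) term raises the minimal degree of the evaluated network by a gap exceeding $n$, uniformly over all projectors simultaneously and without cancellation among the extreme coefficients, and that the all-identity contributions of $\langle D_{f_n}\rangle$ and $\langle D_{f_{n+1}}\rangle$ then match up to a common sign. Proving these two claims \emph{is} proving the theorem; your text instead states them and concludes that citing the degree estimates of~\cite{Armond1} and~\cite{Hajij2} is ``the most efficient way to discharge'' the global step. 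Since those references are precisely where this theorem is proved, the proposal is circular as an independent proof --- it reduces the theorem to itself. (That is not a defect relative to the paper, which also merely cites, but it means what you wrote is an outline of the known strategy rather than a proof.) A smaller inaccuracy: the skein element of a QSN in $\mathcal{S}(S^2)$ is crossingless, so there are no ``edge-crossing regions'' to resolve with relation (1) of Definition~\ref{SkeinDef}; the only expansion in play is that of the projectors $f^{(n)}$, $f^{(2n)}$ into TL diagrams, and adequacy (each arc of $\overline{D}$ passing under each projector at most once) is what controls the circle count, hence the minimal degree, of the all-identity term.
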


\begin{remark}
    It is worth mentioning here that the tail of non-adequate QSNs may exist.
    For instance, the tail of the non-adequate QSN shown on the right of Figure~\ref{smoothings1} has been computed~\cite[Example 4.17]{Hajij2}.
\end{remark}

In this paper we will focus on sequences of quantum spin networks where the edges are all labeled $2n$.
The tail of such networks always exist, thanks to the following proposition.
\begin{proposition}
\label{existance}
	Let $D$ be a trivalent graph, and let $\left\{f_n :E(D) \to \mathbb{N}\right\}_{n\in \mathbb{N}}$ be a sequence of label assignments defined by $f_n(e)=2n$ for every $e \in E(D) $ and for every $n \in \mathbb{N}$.
	Then the tail of $\{D_{f_n}\}_{n\in \mathbb{N}}$ exists. 
\end{proposition}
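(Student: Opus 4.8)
The plan is to derive the proposition directly from Theorem~\ref{mustafa_cody_theorem}. The constant assignment $f_n(e)=2n$ trivially satisfies $f_n(e)\in\{n,2n\}$ for every edge $e\in E(D)$ and every $n$, so the hypothesis on the label set is automatic, and it remains only to verify that $D_{f_n}$ is an \emph{adequate} quantum spin network for each $n$; granting this, the existence of the tail is immediate. Thus the whole argument reduces to a single structural claim: the all-$2n$ coloring is always adequate.

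To prove that claim I would first fix the local model at each vertex. Since all three edges at a vertex carry the color $2n$, the internal labels $x,y,z$ of the skein element $\tau_{2n,2n,2n}$ must solve the admissibility system~\ref{threeequations}, namely $2n=x+y=x+z=y+z$, whose unique solution in $\mathbb{N}$ is $x=y=z=n$. Hence $\left\langle D_{f_n}\right\rangle$ is built from one idempotent $f^{(2n)}$ per edge, joined at every vertex by three internal $n$-bundles, each bundle connecting a distinct pair of incident edges; in particular the $2n$ strands of each edge split evenly as $2n=n+n$ into the two bundles running toward its two neighbors. I would then form the cabled diagram $\overline{D_{f_n}}$ by expanding each internal $n$-bundle into $n$ parallel arcs passing under the $f^{(2n)}$ boxes, and check the adequacy condition that each continuous arc passes under any single idempotent at most once. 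The guiding observation is that the even split $2n=n+n$ produces \emph{no} turnback at any vertex: every strand arriving along one edge continues forward into a bundle leading to a \emph{different} edge and never doubles back into the edge it came from. Locally this is exactly the adequate configuration on the left of Figure~\ref{smoothings1}, as opposed to the turnback-producing one on the right, and the turnback-killing identity in~\ref{propertiesJ} guarantees that any configuration which did double back against an $f^{(2n)}$ would be annihilated, so the non-backtracking picture is the relevant one.

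The step I expect to be the main obstacle is upgrading this per-vertex, non-backtracking picture to a global statement valid for an arbitrary trivalent graph. The difficulty is that a single continuous arc may traverse many edges and, through the opposite endpoint, return to an edge $e$ it has already used, so one must rule out that it passes under the same box $f^{(2n)}_e$ a second time. I would handle this by analyzing the leading (all-identity) state of $\overline{D_{f_n}}$: the strands running through a fixed box are precisely the $2n$ parallel strands of the identity term, each passing through the box once, while any \emph{second} passage under that box would force a turnback at the adjacent $\tau$ vertex, contradicting the even-split non-backtracking just established together with the vanishing in~\ref{propertiesJ}. Carrying out this global bookkeeping carefully --- tracking how the evenly split bundles reconnect as an arc winds through the graph and confirming that no arc meets any $f^{(2n)}$ twice --- is where the real work lies; once it is done, $D_{f_n}$ is adequate for every $n$, and Theorem~\ref{mustafa_cody_theorem} yields the tail.
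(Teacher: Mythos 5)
Your reduction to Theorem~\ref{mustafa_cody_theorem} and your local analysis match the paper's: with every edge colored $2n$, the admissibility system~\ref{threeequations} forces $x=y=z=n$ at each vertex, so each edge's $2n$ strands split evenly into two $n$-bundles and no strand turns back into the edge it came from. The problem is the global step --- the one you yourself flag as ``where the real work lies.'' It is not only left undone; the mechanism you propose for it is false. You claim that a second passage of a continuous arc under a fixed box $f^{(2n)}$ ``would force a turnback at the adjacent $\tau$ vertex.'' It would not: if an edge $e$ appears twice on the boundary walk of a single face (equivalently, if $e$ is a bridge of $D$), the cabled arc passes under the box of $e$ once within one $n$-bundle, winds through the rest of the graph turning from edge to distinct edge at every vertex, and returns to pass under the same box a second time within the other $n$-bundle. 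No turnback ever occurs, and nothing in~\ref{propertiesJ} annihilates such a configuration. So non-backtracking alone cannot yield adequacy; you need a structural reason why an arc can never revisit a box, and your proposal does not supply one.

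The paper supplies it with a single observation that replaces all the arc-by-arc bookkeeping: because of the even $n+n$ split, every strand in the cabled diagram runs alongside one fixed face of $D$ at a constant depth, so the expanded diagram decomposes into exactly $n$ parallel circles inside each face (Figure~\ref{adequate}). A circle lying in a face passes under precisely the idempotents of the edges on that face's boundary, once for each time the boundary walk uses the edge --- hence at most once when that boundary is an embedded cycle. Identifying the continuous arcs as face circles is the missing idea; the turns at a vertex are completely determined by which face the strand runs along, which is what rules out the wandering, revisiting arcs your bookkeeping would otherwise have to exclude. (Note that both your argument and the paper's tacitly assume each face boundary is a cycle, i.e., that $D$ has no bridge; a bridge colored $2n$ makes the entire evaluation vanish, so that case is degenerate for the tail in any event.)
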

\begin{proof}
    Let $F$ be an arbitrary face in $D$.
    Since edges are labeled by $2n$ everywhere then the skein element that corresponds to $D_{f_n}$ appears around $F$ as illustrated in Figure~\ref{adequate}.
    Every such face has an equivalent skein element as illustrated in Figure~\ref{adequate} on the right.
    If we replace each idempotent in $D$ by $2n$ parallel strands then we obtain $n$ parallel circles within this polygon such that each circle passes at most once under each former idempotent.
    This proves that $D_{f_n}$ is adequate for every $n$ and each $f_n(e)$ is linear.
    Hence by Theorem~\ref{mustafa_cody_theorem} the tail of $\left\{D_{f_n}\right\}$ exists.
    \begin{figure}[h]
    	\centering
	    {\includegraphics[scale=0.4]{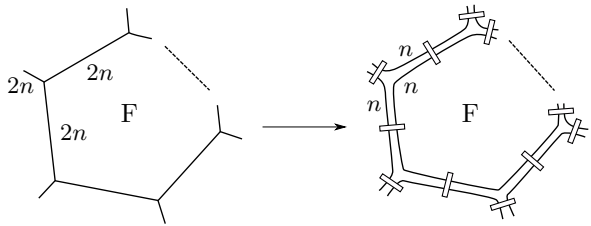}
	    	\put(-70,+50){\footnotesize{$n$}}
    		\put(-72,+64){\footnotesize{$n$}}
		    \put(-84,+43){\footnotesize{$n$}}
            \put(-48,+39){\small{F}}
    		\put(-190,+55){\footnotesize{$2n$}}
		    \put(-200,+32){\footnotesize{$2n$}}
		    \put(-220,+50){\footnotesize{$2n$}}
	    	\put(-177,+39){\small{F}}
    		\caption{A local picture of any face, F, of a quantum spin network whose edges are labeled by $2n$, on the left, and its corresponding skein    element on the right.}
	    \label{adequate}}
    \end{figure}
\end{proof}	


\section{Connections to the tail of the colored Jones polynomial}
\label{Application}

In this section we relate the tail of a quantum spin network to the tail of the colored Jones polynomial first discussed by Dasbach and Lin in~\cite{DL}. We show that the tail of trivalent graphs that are colored by $2n$ can be realized as the tail of an alternating link. 

The Jones polynomial knot invariant is given by a Laurent polynomial in the variable $q$ with integer coefficients.
The Jones polynomial generalizes to an invariant $J_{K,V}^{\mathfrak{g}}(q)\in \mathbb{Z}[q^{\pm 1}]$ of a zero-framed knot $K$ colored by a representation $V$ of a simple Lie algebra $\mathfrak{g}$, and normalized so that $J_{O,V}^{\mathfrak{g}}(q)=1$, where $O$ denotes the zero-framed unknot.
The invariant $J_{K,V}^{\mathfrak{g}}(q)$ is called the quantum invariant of the knot $K$ associated with the simple Lie algebra $\mathfrak{g}$ and the representation $V$.
The Jones polynomial corresponds to the second dimensional irreducible representation of $\mathfrak{sl}(2,\mathbb{C} )$ and the \textit{$n$-th colored Jones polynomial}, denoted by $J_{n,K}(q)$, is the quantum invariant associated with the $n+1$-dimensional irreducible representation of $\mathfrak{sl}(2,\mathbb{C} )$.

Dasbach and Lin observed in~\cite{DL} that, up to a common sign change, the first $n$ coefficients of $J_{n,L}(q)$ agree with the first $n$ coefficients of $J_{n+1,L}(q)$ for an alternating link $L$.
As seen in the following example:
\begin{example}
	The colored Jones polynomial of the knot $6_2$, up to multiplication with a suitable power $q^{\pm a_n}$ for some integer $a_n$, is given in the following table:
	\begin{center}
		\begin{small}
		\begingroup
            \setlength{\tabcolsep}{0.1em} 
    			\begin{tabular}{ |c|rllllllll|} 
			    	\hline
		    		\:$n=1$\:~&\:$1$&&&&&&&&\\
	    			$n=2$ & $1$& $-2q$& $+2q^{2}$& $-2q^{3}$& $+2q^{4}$& $-\:\;q^{5}$& $+\:\;q^{6}$&&  \\
    				$n=3$ & $1$& $-2q$&& $+4q^{3}$& $-5q^{4}$&& $+6q^{6}$& $-6q^{7}$& $+\cdots$\:~  \\ 
				    $n=4$ &  $1$& $-2q$& &$+2q^{3}$& $+\:\;q^{4}$& $-4q^{5}$& $-2q^{6}$& $+7q^{7}$& $+\cdots$\\ 
			    	$n=5$ & $1 $& $- 2 q$& &$ + 2 q^3 $& $-\:\;q^4$& $ + 2 q^5 $& $- 6 q^6$& $ + 2 q^7$& $ +\cdots$\\ 
		    		$n=6$ & $1$& $-2q$&& $+2q^{3}$& $-\:\;q^{4}$&&& $-2q^{7}$& $+\cdots$ \\ 
	    			$n=7$ & $1$& $-2q$&& $+2q^{3}$& $-\:\;q^{4}$&& $-2q^{6}$& $+4q^{7}$& $+\cdots$ \\ 
    				$n=8$ & $1$& $-2q$&& $+2q^{3}$& $-\:\;q^{4}$&& $-2q^{6}$& $+2q^{7}$& $+\cdots$\\
				    \hline 
			    \end{tabular}
            \endgroup
		\end{small}
	\end{center}
	hence the tail of the colored Jones polynomial of the knot $6_2$ is given by
	\begin{equation*}
		T_{6_2}(q)=1-2q+0q^2+2q^{3}-q^{4}+0q^{5}- 2q^{6}+2q^{7}+\cdots
	\end{equation*}
\end{example}
In~\cite{Armond1} Armond reduced the study of the tail of the colored Jones polynomial of an alternating link $L$ to a simpler sequence of skein elements in $\mathcal{S}(S^2)$ obtained from an alternating diagram of $L$. We recall Armond's result in detail here.


Let $D$ be a diagram of a link $L$ in $S^2$.
Any crossing of $D$ can be smoothed in two ways, either by the $A$-smoothing or by the $B$-smoothing illustrated in Figure~\ref{smoothings}.
By applying an $A$-smoothing or a $B$-smoothing to every crossing in $D$, one obtains a collection of circles called a \textit{Kauffman state} of the diagram $D$.
Let $c(D)$ be the crossing number of the diagram $D$, thus there are $2^{c(D)}$ Kauffman states.
Among these states, two particular states are important to us, namely, the all-$A$ smoothing (the state in which all crossings were replaced by the $A$-smoothing) denoted by $S_A(D)$, and the all-$B$ smoothing denoted by $S_B(D)$.

\begin{figure}[h]
	\centering
	{\includegraphics[scale=0.16]{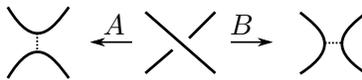}
		\put(-100,+17){$A$}
		\put(-52,+17){$B$}
		\caption{The $A$-smoothing and the $B$-smoothing of a crossing. }
	\label{smoothings}}
\end{figure}



For an alternating reduced diagram $D$ of a link $L$, a result of Kauffman~\cites{Kauffman1} states that the highest and lowest coefficients of the Kauffman bracket evaluation of such a diagram $D$ are equal to the highest and lowest coefficients of the $S_A(D)$ and $S_B(D)$ respectively.
An analogue of this result was proven for adequate links by Armond~\cites{Armond1}.
Starting with $S_B(D)$, consider the skein element obtained by decorating each circle in $S_B(D)$ with the $n^{th}$ Jones-Wenzl idempotent and replacing each dashed line in $S_B(D)$ with the $(2n)^{th}$ Jones-Wenzl idempotent.
Denoting this skein element  $S^{\,n}_B(D)$ (see Figure~\ref{graphs} for an example), the following result holds.
\begin{theorem}(Armond~\cite{Armond1})
	\label{cody thm}
	Let $L$ be a link in $S^3$ with reduced alternating knot diagram $D$.
	Then
	\begin{equation*}
		J_{n,L}(q)\doteq_{(n+1)}S_B^{\,n}(D).
	\end{equation*} 
\end{theorem}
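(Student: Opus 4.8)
The plan is to compute $J_{n,L}(q)$ entirely inside the Kauffman bracket skein module of $S^2$ and then to extract, coefficient by coefficient in the lowest degrees, the contribution of the all-$B$ state. First I would recall the skein-theoretic model of the colored Jones polynomial: starting from the reduced alternating diagram $D$, replace each strand of $L$ by $n$ parallel copies cabled with the Jones--Wenzl idempotent $f^{(n)}$, and evaluate the resulting skein element in $\mathcal{S}(S^2)$, dividing by the evaluation of the $n$-colored unknot. Since $\doteq_{(n+1)}$ records only the first $n+1$ coefficients up to a global sign, this normalizing denominator contributes at most an overall power of $q$ together with a sign, and so may be discarded for the purpose of the stated equivalence.

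Next I would resolve every crossing of the $n$-cabled diagram. At a crossing the two incident $n$-colored strands may be fused, and the crossing rewritten as a linear combination over the even fusion channels: for $0 \le i \le n$ one threads the $(2n-2i)$-colored idempotent $f^{(2n-2i)}$ through the crossing, each term carrying a coefficient assembled from powers of $A$ and ratios of the quantities $\Delta_m$. The extremal channel $i=0$ inserts $f^{(2n)}$ along the arc produced by the $B$-smoothing, with the two circles meeting that arc decorated by $f^{(n)}$; selecting this channel at every crossing reproduces precisely the skein element $S_B^{\,n}(D)$. In other words $S_B^{\,n}(D)$ is exactly the ``all maximal channel'' summand of the full state sum for $J_{n,L}(q)$.

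The core of the argument is a degree estimate showing that every other summand lies beyond the first $n+1$ coefficients. Here the reduced alternating hypothesis is essential: it forces $S_B(D)$ to be $B$-adequate, so that $S_B^{\,n}(D)$ is an adequate skein element in the sense used for Theorem~\ref{mustafa_cody_theorem}; adequacy guarantees that the minimal-degree term of $S_B^{\,n}(D)$ survives without cancellation and that $deg(S_B^{\,n}(D))$ is computable. I would then check, crossing by crossing, that replacing the maximal channel $i=0$ by any channel $i \ge 1$ strictly raises the minimal $q$-degree, and quantify the shift so that a single such replacement pushes the contribution past degree $deg(S_B^{\,n}(D)) + n + 1$. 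Invoking part (2) of Lemma~\ref{easy} with parameter $n+1$ then discards all non-extremal summands from the first $n+1$ coefficients, yielding $J_{n,L}(q) \doteq_{(n+1)} S_B^{\,n}(D)$.

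The hard part will be this degree estimate. One must control the coefficients of the fusion expansion uniformly over all crossings, verify that lowering any single channel increases the minimal degree by a definite amount, and---most delicately---rule out cancellation among the exponentially many mixed states, which could in principle conspire to lower the overall minimal degree below that of the extremal term. Adequacy of $S_B^{\,n}(D)$ prevents cancellation within the extremal summand, but establishing the matching lower bound for the mixed summands, and thereby the clean gap needed to apply Lemma~\ref{easy}, is the technical heart of the proof.
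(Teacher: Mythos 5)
The paper itself contains no proof of Theorem~\ref{cody thm}: it is quoted verbatim from Armond~\cite{Armond1}, so the only meaningful comparison is with Armond's published argument. Your outline follows the same strategy as that argument: realize $J_{n,L}$ skein-theoretically from the $f^{(n)}$-cabled diagram in $\mathcal{S}(S^2)$, expand each cabled crossing over fusion channels, identify the all-extremal-channel term with $S_B^{\,n}(D)$, and use adequacy of the all-$B$ state (supplied by the reduced alternating hypothesis) together with degree estimates to push every other term past the first $n+1$ coefficients. As a plan, this is the right plan.

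As a proof, however, it has genuine gaps. First, the normalization step is wrong: dividing by the value of the $n$-colored unknot is division by $\Delta_n=(-1)^n[n+1]_q$, which is not a sign times a power of $q$. Since $1/[n+1]_q = q^{n/2}(1-q)/(1-q^{n+1})$, this factor is, up to a monomial, $\doteq_{n+1} (1-q)$, and convolving with $(1-q)$ does change the first $n+1$ coefficients. This is exactly the reduced/unreduced discrepancy that the paper itself glosses over (compare the computation of the tail of $\Delta_n$ after Proposition~\ref{lemmaaa}: the skein evaluation $S_B^{\,n}(D)$ matches the \emph{unreduced} polynomial, and the reduced one differs by a factor whose tail is $1-q$); you must either prove the statement for the unreduced polynomial or carry this factor through explicitly. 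Second, the degree estimate you defer is not a technical detail to be checked later --- it \emph{is} the proof; everything preceding it is bookkeeping. Moreover, your description of the difficulty is misdirected: the minimal degree of a sum can never drop below the minimum of the minimal degrees of its terms, so cancellation among the exponentially many mixed states cannot ``lower the overall minimal degree below that of the extremal term.'' What is actually required is only a per-term lower bound exceeding $deg(S_B^{\,n}(D))+n+1$, and the genuine obstacle in establishing it is that the mixed-channel skein elements are in general \emph{not} adequate, so their minimal degrees are not given by the naive formula that adequacy provides for the extremal term. Armond obtains these bounds by an induction on crossings, with lemmas quantifying how a non-extremal channel (equivalently, arcs turning back against an idempotent) raises the minimal degree. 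Without that input, your argument does not close.
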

By this theorem, the tail of the colored Jones polynomial is determined by the sequence $\left\{S_B^{\,n}(D)\right\}_{n\in \mathbb{N}}$.  
Now, for every $n$, the skein element $S_B^{\,n}(D)$ can be written as a trivalent graph in $\mathcal{S}( S^2)$. 
The element $S_B^{\,n}(D)$ can be written as quantum spin network by using the following simple identity :

\begin{eqnarray}
\label{skeine}
    \left\langle   \hspace{0.4cm}
    \begin{minipage}[h]{0.09\linewidth}
        \vspace{0pt}
        \scalebox{0.3}{\includegraphics{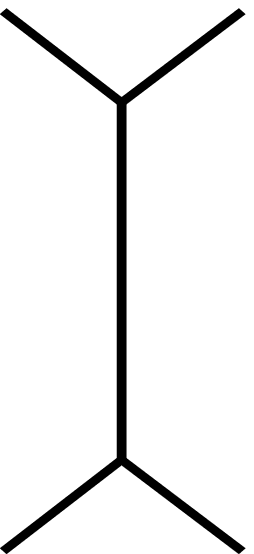}}
        \put(-29,+46){\footnotesize{$n$}}
        \put(-29,-1){\footnotesize{$n$}}
        \put(2,-1){\footnotesize{$n$}}
        \put(2,+46){\footnotesize{$n$}}
        \put(-5,+23){\footnotesize{$2n$}}
    \end{minipage}\hspace{-3mm}
    \right\rangle=
    \begin{minipage}[h]{0.09\linewidth}
        \vspace{0pt}
        \scalebox{0.14}{\includegraphics{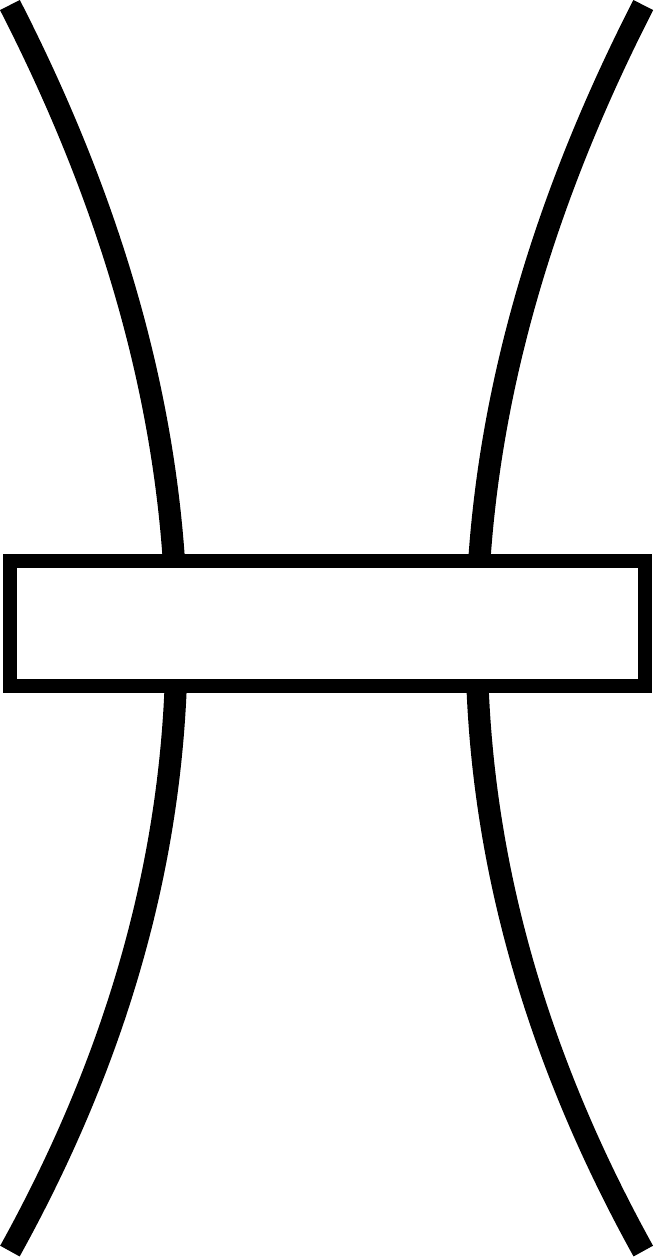}}
        \put(3,+50){\footnotesize{$n$}}
        \put(-35,+50){\footnotesize{$n$}}
    \end{minipage}
\end{eqnarray}
The following identity can also be useful to simplify the final QSN:
\begin{eqnarray}
\label{contraction}
    \left\langle \hspace{4pt}
    \begin{minipage}[h]{0.15\linewidth}
        \vspace{-0pt}
        \scalebox{0.35}{\includegraphics{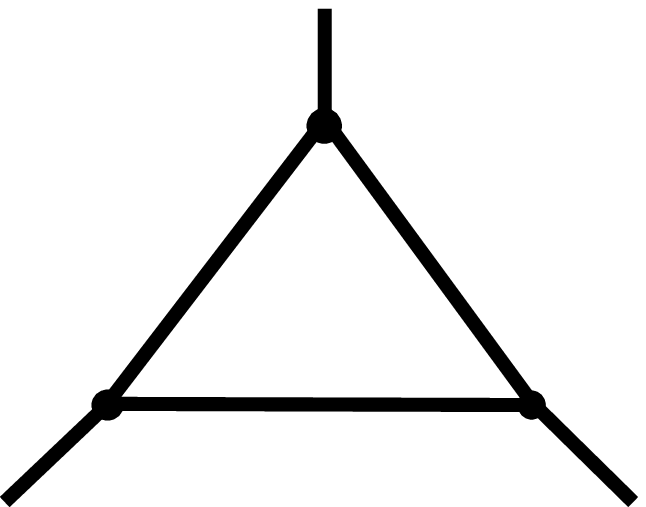}}
        \put(-69,-10){$2n$}
        \put(-1,-10){$2n$}
        \put(-28,50){$2n$}
        \put(-55,25){$n$}
        \put(-15,25){$n$}
        \put(-35,0){$n$}
    \end{minipage}
    \right\rangle \hspace{20pt} = \hspace{30pt}
    \begin{minipage}[h]{0.13\linewidth}
        \vspace{-0pt}
        \scalebox{0.25}{\includegraphics{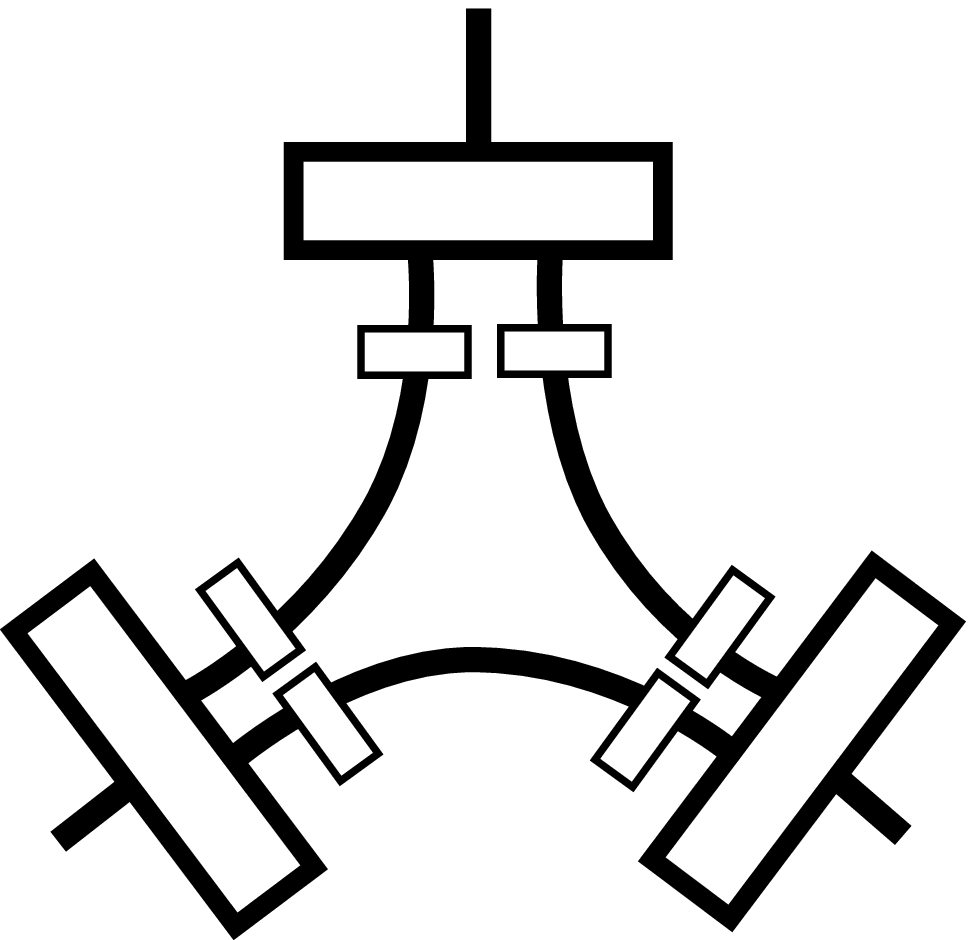}}
        \put(-70,-6){$2n$}
        \put(-10,-6){$2n$}
        \put(-30,62){$2n$}
    \end{minipage}
    \hspace{30pt}=\hspace{0.5cm}
    \left\langle \hspace{4pt}
    \begin{minipage}[h]{0.13\linewidth}
        \vspace{-0pt}
        \scalebox{0.25}{\includegraphics{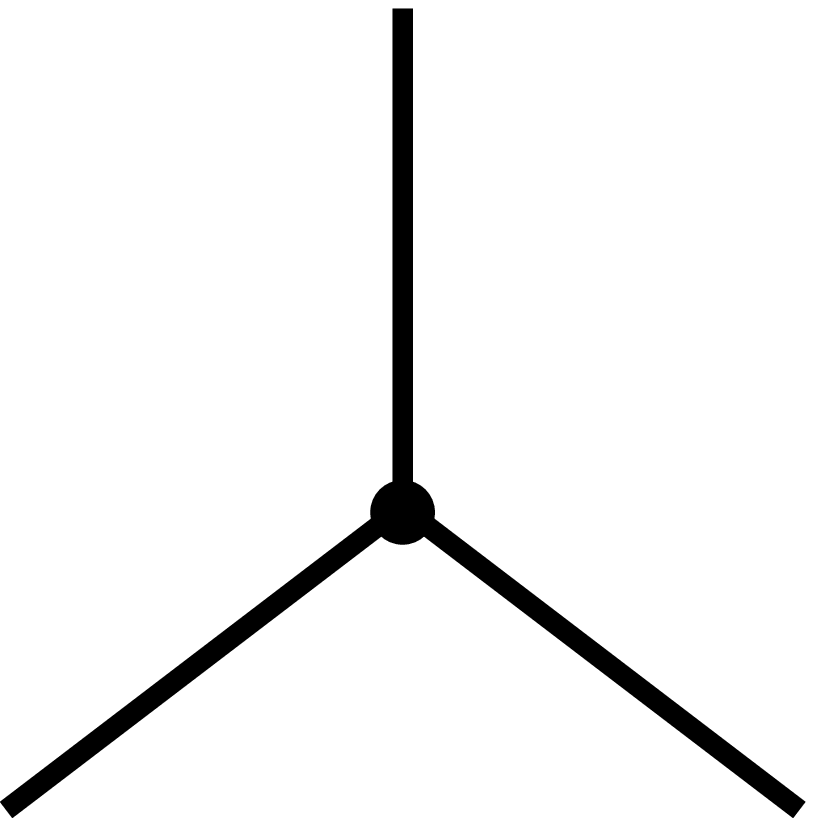}}
        \put(-60,-10){$2n$}
        \put(-1,-10){$2n$}
        \put(-26,50){$2n$}
    \end{minipage}
    \right\rangle
\end{eqnarray}
The left equality comes from equation~\ref{skeine} applied at each vertex, while the right equality was illustrated at the right of Figure~\ref{replace}.

Consider the trefoil $\mathcal{T}$ appearing in Figure~\ref{example123} on the left, the skein element  $S^{\,n}_B(\mathcal{T})$ appears on in the middle of the figure.
Using identity~\ref{skeine} we can obtain the trivalent graph equivalent to $S^{\,n}_B(\mathcal{T})$  shown on the right of Figure~\ref{example123}. Under identity~\ref{contraction} this has the same tail as the Theta graph which will be calculated in section~\ref{Computing}.

\begin{figure}[h]
    \centering
    {\includegraphics[scale=0.07]
        {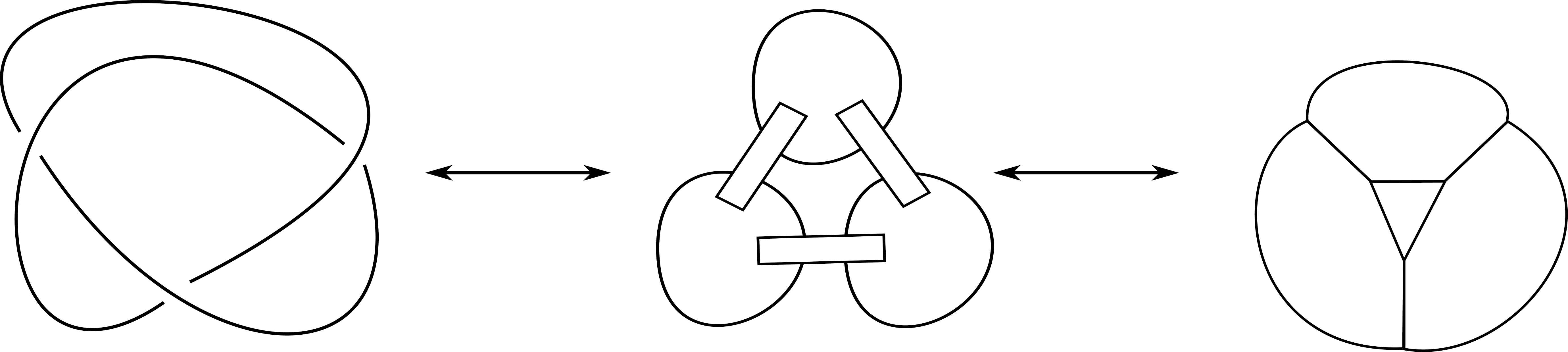}
        \small{
            \put(-14,23){\footnotesize{$2n$}}
            \put(-23,5){\footnotesize{$2n$}}
            \put(-40,23){\footnotesize{$2n$}}}
        \caption{From left to right: The trefoil $\mathcal{T}$, the skein element $S^{\,n}_B(\mathcal{T})$ and the corresponding trivalent graph of $S^{\,n}_B(\mathcal{T})$ obtained using identity~\ref{skeine}.
        All unlabeled edges in the trivalent graph are colored with $n$.
        All arcs in the skein element  $S^{\,n}_B(\mathcal{T})$  are labeled $n$.}
    \label{example123}}
\end{figure} 

In fact, given any trivalent graph $G$ corresponding to a quantum spin network with edges colored $2n$ we can construct an alternating link diagram $D$ such that the tail of the colored Jones polynomial of $D$ is equal to the tail of the sequence $\{G_{2n}\}_{n\in N}$ as we will explain in the following subsection.

\subsection{Going From Trivalent Graphs to Link Diagrams}
Given a trivalent graph $G$, the correspondence appearing in Figure~\ref{c2} can always be used to obtain an alternating link diagram $L(G)$.
In this paper we use the convention that all edges in trivalent graphs are replaced by negative twist regions as illustrated in the right of Figure~\ref{c2}.  

\begin{figure}[h]
\centering
    {\includegraphics[scale=0.12]{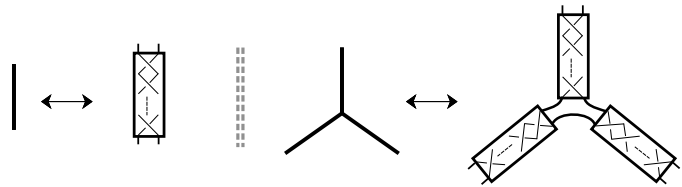}
        \caption{Obtaining a link diagram from a trivalent graph.}
    \label{c2}}
\end{figure} 

The tail of $J_{n,L(G)}$ can be seen to be equivalent to the tail of the trivalent graph sequence $\{G_{2n}\}_{n\in N}$ by observing that each skein element $S^{\,n}_B(L(G))$ corresponds precisely to the graph $G_{2n}$ under identities~\ref{skeine} and~\ref{contraction}.
It then follows from Theorem~\ref{cody thm} that the tail of the quantum spin network sequence $\{G_{2n}\}$ is equivalent to the tail of $J_{n,L(G)}$.
The correspondence given in Figure~\ref{c2} thus allows for the tail of any family of link diagrams corresponding to $L(G)$ for any trivalent graph $G$ to be computed.

\subsection{Reduced Graphs and the tail of the Colored Jones Polynomial}

The sequence $\{S_B^{\,n}(D)\}_{n\in \mathbb{N}}$ depends on a simple planar graph obtained from the knot diagram $D$.
We review this fact here.
To each Kauffman state $S(D)$ of a link diagram $D$, one can associate a graph $\mathbb{G}_{S(D)}$ obtained by replacing each circle of $S(D)$ by a vertex and each dashed line by an edge. See Figure \ref{graphs}.
In particular the \textit{$B$-graph}, denoted by $\mathbb{G}_B(D)$, is the graph obtained from the all $B$-state in this manner.
\textit{The reduced $B-$graph}, denoted by $\mathbb{G}^{\prime}_B(D)$, is obtained from the $B$-graph by keeping the same set of vertices of $B(D)$ and replacing parallel edges by a single edge.
See Figure~\ref{graphs} for an example. 

\begin{figure}[h]
\centering
    {\includegraphics[scale=0.07]{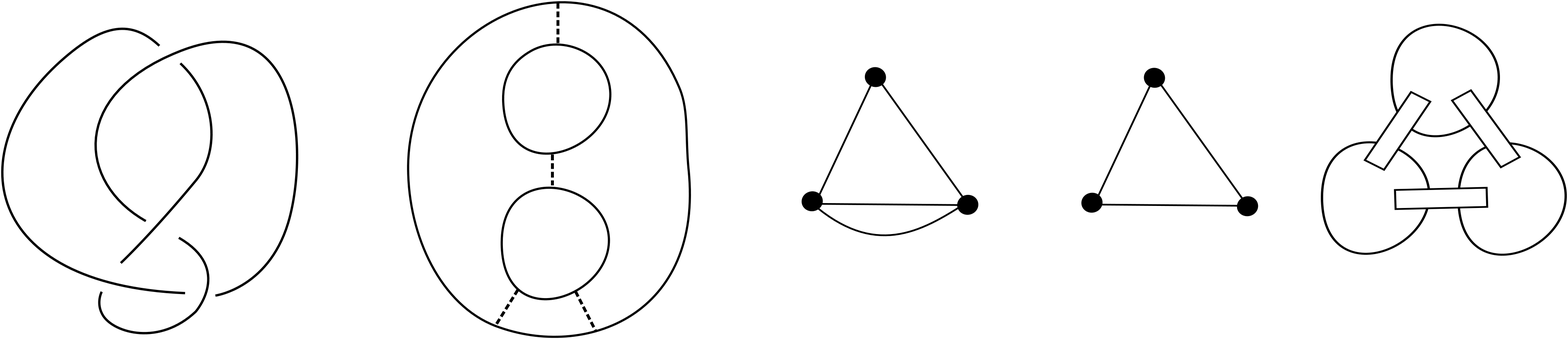}
        \caption{From left to right: A knot diagram $D$, its B-state, the B-graph $\mathbb{G}_B(D)$ of $D$, the reduced $B$-graph $\mathbb{G}^{\prime}_B(D)$ of $D$ and the skein element $S^{\,n}_B(D)$.  }
    \label{graphs}}
\end{figure} 
Using this definition of reduced $B$-graph, Theorem~\ref{cody thm} implies that the tail of colored Jones polynomial depends only on the reduced $B$-graph of $D$.
Thus  we will define the \textit{tail of a reduced graph $G$} to be the tail of the colored Jones polynomial of a link $D$ whose reduced $B$-graph is $G$. 

The reader is reminded that we are working with two kinds of graphs now: the trivalent graphs, denoted by $G$, and the $B$-graphs, denoted by $\mathbb{G}$.
Using Theorem~\ref{cody thm} we can meaningfully talk about the tail of colored Jones polynomial and its reduced graph.
Furthermore, since the reduced graph corresponds to skein element which in turn corresponds to a trivalent graph we may also refer to the tail of the colored Jones polynomial via its corresponding trivalent graph.
This correspondence between various graphs is illustrated in Figure~\ref{Correspondence}.

\begin{figure}[h]
\centering
    {\includegraphics[scale=0.14]{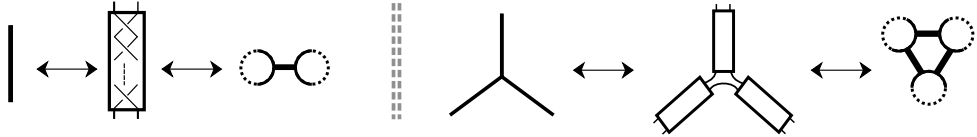}
        \caption{Correspondence between trivalent graphs (left), link diagrams (middle) and reduced graphs (right).}
    \label{Correspondence}}
\end{figure} 


\section{The Product Structure on Tails}\label{Prod}
\label{product}

Armond and Dasbach proved in~\cites{CodyOliver} that the tail of the edge connect sum of two graphs, $\mathbb{G}_i$ is equal to the multiplication of the tails of these two graphs.
We recall this product structure result before giving the natural analogue for it on the tail of quantum spin networks with edges colored $2n$.
The multiplication of two graphs is defined in Figure~\ref{product graphs}. 

\begin{figure}[h]
\centering
    {\includegraphics[scale=0.15]{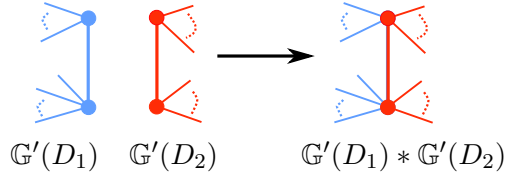}
        \put(-115,-15){$\mathbb{G}^{\prime}(D_2)$}
        \put(-160,-15){$\mathbb{G}^{\prime}(D_1)$}
        \put(-50,-15){$\mathbb{G}^{\prime}(D_1)*\mathbb{G}^{\prime}(D_2)$}
        \caption{Two reduced graphs and their product.}
    \label{product graphs}}
\end{figure} 

\begin{theorem}
\label{productcody}
    Let $D_1$ and $D_2$ be two reduced link diagrams.
    Then $T_{\mathbb{G}^{\prime}(D_1)}T_{\mathbb{G}^{\prime}(D_2)}=T_{\mathbb{G}^{\prime}(D_1)*\mathbb{G}^{\prime}(D_2)}$.
\end{theorem}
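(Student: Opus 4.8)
The plan is to transport the identity from reduced graphs to the skein elements that compute their tails, and then to exhibit a factorization of these skein evaluations. By Theorem~\ref{cody thm} together with the definition of the tail of a reduced graph, $T_{\mathbb{G}^{\prime}(D_i)}$ is precisely the tail of the sequence $\{S_B^{\,n}(D_i)\}_{n\in\mathbb{N}}$, and by identities~\ref{skeine} and~\ref{contraction} this coincides with the tail of the trivalent graph sequence $\{(G_i)_{2n}\}_{n\in\mathbb{N}}$, where $G_i$ is the trivalent graph associated to $\mathbb{G}^{\prime}(D_i)$. Hence it suffices to prove the corresponding statement for these evaluations: that for every $n$ the power series $\langle (G_1 * G_2)_{2n}\rangle$ and $\langle (G_1)_{2n}\rangle\,\langle (G_2)_{2n}\rangle$ are $n$-equivalent, since then Lemma~\ref{easy}(1) and the uniqueness of the tail give $T_{\mathbb{G}^{\prime}(D_1)*\mathbb{G}^{\prime}(D_2)}=T_{\mathbb{G}^{\prime}(D_1)}T_{\mathbb{G}^{\prime}(D_2)}$.

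First I would read off the operation $*$ at the level of skein elements. The edge connect sum of Figure~\ref{product graphs} identifies the distinguished edge of $\mathbb{G}^{\prime}(D_1)$ with that of $\mathbb{G}^{\prime}(D_2)$, so under the translation above it glues $S_B^{\,n}(D_1)$ to $S_B^{\,n}(D_2)$ along the single $2n$-colored Jones-Wenzl idempotent carried by the shared edge. I would then cut along this idempotent. Because the skein module of a disk whose boundary carries one copy of $f^{(2n)}$ is one dimensional and generated by $f^{(2n)}$ (the $Y_{a,b}$ computation recorded after~\ref{propertiesJ}), each of the two resulting halves is a scalar multiple of $f^{(2n)}$; the scalar equals the evaluation of that half with its $2n$-ends reconnected, divided by $\Delta_{2n}$. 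Reconnecting the ends of the $i$-th half simply restores the shared edge and reproduces $\langle (G_i)_{2n}\rangle$. Reassembling the two halves and closing the shared idempotent therefore yields an identity of the shape $\langle (G_1 * G_2)_{2n}\rangle = \langle (G_1)_{2n}\rangle\,\langle (G_2)_{2n}\rangle\cdot c_n$, where $c_n$ is an explicit local correction assembled from $\Delta_{2n}$ and the theta-net evaluations at the two endpoints of the glued edge.

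The main obstacle is the final step: proving that this local factor is invisible to the tail, i.e.\ that after normalization $c_n\doteq 1$. Conceptually this is forced, because the edge connect sum realizes the reduced $B$-graph of a connected sum of alternating diagrams and the normalized colored Jones polynomial is multiplicative under connected sum; this already shows $\langle (G_1*G_2)_{2n}\rangle\doteq_{n+1}\langle (G_1)_{2n}\rangle\langle (G_2)_{2n}\rangle$, so the net correction must normalize to a unit. To establish $c_n\doteq 1$ directly in the skein language I would write $c_n$ as a ratio of Laurent polynomials, use Lemma~\ref{easy2} to expand $1/\Delta_{2n}$ and the reciprocal theta-factors as genuine elements of $\mathbb{Z}[q^{-1}][[q]]$, and then track minimal degrees with Lemma~\ref{easy}(2) to confirm that the first $n$ coefficients of $\langle (G_1*G_2)_{2n}\rangle$ and of $\langle (G_1)_{2n}\rangle\langle (G_2)_{2n}\rangle$ agree up to a common sign for every $n$. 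The only genuinely delicate bookkeeping is this degree-and-sign normalization, since the raw factor $c_n$ is not itself equal to $1$ and only becomes trivial after the $q^{a}$-normalization built into the definition of the tail.
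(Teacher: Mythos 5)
Your central decomposition step fails, and the failure is geometric rather than a matter of bookkeeping. In the edge connect sum of Figure~\ref{product graphs}, the two reduced graphs are identified along an edge \emph{together with both of its endpoints}. In the skein picture those endpoints are state circles carrying $f^{(n)}$, and the remaining edges of \emph{both} graphs attach to these two shared circles. Consequently there is no simple closed curve in $S^2$ meeting the diagram only in the shared $2n$-band: since $D_1$ and $D_2$ are reduced, the chosen edge of $\mathbb{G}^{\prime}(D_i)$ is never a bridge, so $\mathbb{G}^{\prime}(D_i)$ minus that edge still connects its two endpoints, and the complement of the shared band in the product diagram is connected. A curve crossing only that band therefore cannot have ``graph~1'' on one side and ``graph~2'' on the other; it can only cut off a segment of the band itself. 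Any curve that genuinely separates the two graphs must pass through the two shared circles, i.e.\ cross four $n$-colored strands, so the two halves live in $Y_{n,n,n,n}$, whose dimension grows with $n$ --- not in the one-dimensional $Y_{2n,2n}$ (note also that a disk with a single cluster, $Y_{2n}$, is zero-dimensional by the discussion after~\ref{propertiesJ}, so ``a disk whose boundary carries one copy of $f^{(2n)}$'' is not the space you want). Hence ``each half is a scalar multiple of $f^{(2n)}$'' is unjustified, and the factorization $\langle (G_1*G_2)_{2n}\rangle=c_n\,\langle (G_1)_{2n}\rangle\langle (G_2)_{2n}\rangle$ does not follow. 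The missing ingredient is precisely the heart of Armond--Dasbach's argument: expand the pairs of parallel $n$-strands crossing the separating curve in the fusion basis and prove, by a minimal-degree estimate in the spirit of Lemma~\ref{good}, that only one extreme fusion color contributes to the first $n$ coefficients; only after that reduction do the halves lie in a one-dimensional space, and only then does your $c_n$-bookkeeping start. Your fallback remark --- that the identity is ``forced'' by multiplicativity of the normalized colored Jones polynomial under connected sum --- proves a different theorem: connected sum of diagrams corresponds to gluing reduced $B$-graphs at a \emph{vertex}, not along an edge, so it cannot be used to see that $c_n$ normalizes away here.

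On the comparison you were asked about: the paper does not prove Theorem~\ref{productcody} at all; it is recalled from~\cite{CodyOliver}. What the paper does prove is the analogue for quantum spin networks, Theorem~\ref{codyy}, and it is instructive to see why that proof succeeds where your attempt stalls: the product $<G_1,G_2>$ of Figure~\ref{product_wiring} is defined by excising a trivalent \emph{vertex} from each all-$2n$ graph, so the separating curve crosses exactly the three incident $2n$-edges and each piece lies in $Y_{2n,2n,2n}$, which genuinely is one-dimensional and generated by $\tau_{2n,2n,2n}$; writing $G_i=R_i(q)\,\Theta_{2n}$ and $<G_1,G_2>=R_1(q)R_2(q)\,\Theta_{2n}$ then gives the result after dividing out $T_{\Theta_{2n}}$ from Proposition~\ref{lemmaaa}. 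Your proposal is, in effect, this vertex-cutting argument transplanted to edge-gluing of reduced graphs, where the one-dimensionality hypothesis that powers it is simply absent.
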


Now for our consideration, let $G_1$ and $G_2$ be trivalent graphs in $\mathcal{S}(S^2)$.
Suppose that each of $G_1$ and $G_2$ contains the trivalent graph $\tau_{2n,2n,2n}$ as in Figure~\ref{graph123}.

\begin{figure}[h]
\centering
    {\includegraphics[scale=0.27]{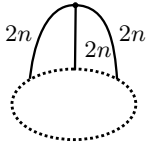}
        \footnotesize{
            \put(-50,38){$2n$}
            \put(-7,38){$2n$}
            \put(-20,32){$2n$}}
        \caption{The graph $G$ with a trivalent graph $\tau_{2n,2n,2n}$}
    \label{graph123}}
\end{figure} 

Define the map
$\; <,>:\mathcal{S}(S^2)\times\mathcal{S}(S^2)\longrightarrow\mathcal{S}(S^2)$
by the wiring illustrated in Figure~\ref{product_wiring}.
The theta graph provides a natural identity for this multiplication.

\begin{figure}[h]
\centering
    {\includegraphics[scale=0.09]{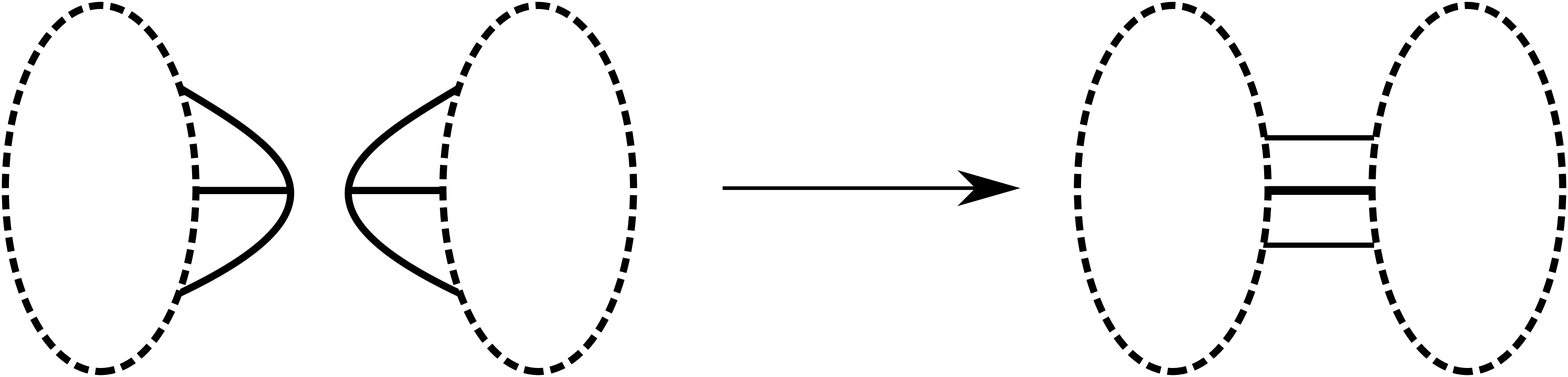}
        \footnotesize{
            \put(-135,-10){$G_2$}
            \put(-180,-10){$G_1$}
            \put(-50,-10){$<G_1,G_2>$}}
        \caption{The product $<G_1,G_2>$}
    \label{product_wiring}}
\end{figure}
The set of tails of the trivalent graphs with edges colored $2n$ behaves well under the product of Figure~\ref{product_wiring} as described in the following theorem.
\begin{theorem}
\label{codyy}
    Let $G_1$ and $G_2$ be as defined above.
    Then
    \begin{equation*}
        T_{<G_1,G_2>}=\frac{1}{(q^2;q)_{\infty}} T_{G_1}T_{G_2}
    \end{equation*}
\end{theorem}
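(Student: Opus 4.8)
The plan is to reduce the statement to a single gluing identity at the distinguished vertices $\tau_{2n,2n,2n}$ of $G_1$ and $G_2$, and then to pass to tails using the multiplicativity of $\doteq_n$ recorded in Lemma~\ref{easy}. The structural fact that drives the whole argument is that the skein module $Y_{2n,2n,2n}$ of the disk with three $2n$-colored boundary clusters is one-dimensional and generated by $\tau_{2n,2n,2n}$, since $(2n,2n,2n)$ is admissible by~\ref{threeequations}.

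First I would cut each $G_i$ along a small circle surrounding its distinguished vertex, writing $G_i$ as a trinion skein element $\widehat{G}_i$ carrying three $2n$-colored strands on its boundary, capped off by $\tau_{2n,2n,2n}$. One-dimensionality of $Y_{2n,2n,2n}$ forces $\widehat{G}_i = c_i\,\tau_{2n,2n,2n}$ for some scalar $c_i\in\mathbb{Q}(A)$, and capping with a second vertex gives $\left\langle G_i\right\rangle = c_i\,\theta(2n,2n,2n)$, where $\theta(2n,2n,2n)$ denotes the theta-net evaluation. The wiring of Figure~\ref{product_wiring} is precisely the direct gluing of $\widehat{G}_1$ and $\widehat{G}_2$ along their three $2n$-colored strands, so, since gluing two copies of $\tau_{2n,2n,2n}$ along three strands produces the theta net,
\[
\left\langle <G_1,G_2>\right\rangle = c_1 c_2\,\theta(2n,2n,2n).
\]
Eliminating $c_1$ and $c_2$ yields, for every $n$ and up to the normalizing power of $q$,
\[
\left\langle <G_1,G_2>\right\rangle\cdot\theta(2n,2n,2n) = \left\langle G_1\right\rangle\,\left\langle G_2\right\rangle .
\]
This also explains why the theta graph is the identity for the product: taking $G_2$ to be the theta graph returns $\left\langle G_1\right\rangle$.

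Now I would pass to tails. Let $T_\theta$ denote the tail of $\{\theta(2n,2n,2n)\}_n$. Applying Lemma~\ref{easy}(1) to the two (equal) sequences appearing in the displayed identity gives $T_{<G_1,G_2>}\cdot T_\theta = T_{G_1}T_{G_2}$ as an identity of normalized power series. By the computation of the tail of the theta graph carried out in Section~\ref{Computing} one has $T_\theta = (q^2;q)_{\infty}$, which has constant term $1$ and is therefore a unit in $\mathbb{Z}[[q]]$; dividing by it gives $T_{<G_1,G_2>} = \frac{1}{(q^2;q)_{\infty}}T_{G_1}T_{G_2}$, as desired.

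The main obstacle is the per-$n$ gluing identity: one must verify that the wiring of Figure~\ref{product_wiring} is genuinely the direct gluing of the two trinions along three $2n$-colored strands, so that any crossings it introduces contribute only a framing power of $q$ that the degree-zero normalization discards, and that the one-dimensionality argument goes through verbatim once the Jones--Wenzl projectors are inserted. Everything past that point is formal, being just the multiplicativity of tails and the known value of $T_\theta$.
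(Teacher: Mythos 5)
Your proposal is correct and follows essentially the same route as the paper's proof: both cut each $G_i$ at the distinguished vertex, use the one-dimensionality of $Y_{2n,2n,2n}$ to write $\left\langle G_i\right\rangle = R_i(q)\,\Theta(2n,2n,2n)$ and $\left\langle <G_1,G_2>\right\rangle = R_1(q)R_2(q)\,\Theta(2n,2n,2n)$, and then pass to tails using the multiplicativity of $\doteq_n$ together with $T_{\Theta_{2n}} = \frac{(q;q)_\infty}{1-q} = (q^2;q)_\infty$ from Proposition~\ref{lemmaaa}. Your write-up is in fact a bit more careful than the paper's (introducing the scalars $c_i$ for the cut trinions rather than conflating graphs with their evaluations), but the ideas coincide.
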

\begin{proof}
    The skein space $Y_{2n,2n,2n}$ is one dimensional and generated by the skein element $\tau_{2n,2n,2n}$.
    Hence we can write 
    \begin{equation*}
        G_i=R_i(q)\Theta(2n,2n,2n),
    \end{equation*}
    for some $R_i(q) \in \mathbb{Q}(q)$ for $i=1,2$.
    Moreover, by applying the same fact to the diagram $<G_1,G_2>$, one graph at a time, we have:
    \begin{equation*}
        <G_1,G_2>=R_1(q)R_2(q)\Theta(2n,2n,2n).
    \end{equation*}
    By our assumption that $G_1$ and $G_2$ are trivalent graphs with edges colored $2n$, proposition~\ref{existance} ensures that we have
    \begin{equation*}
        T_{G_i}\doteq_n R_i(q)\Theta(2n,2n,2n)
    \end{equation*}
    for $i=1,2$.
    Now, $T_{\Theta(2n,2n,2n)}=\frac{(q;q)_\infty}{1-q}$ as will be shown in Proposition~\ref{lemmaaa}.
    Thus,
    \begin{eqnarray*}
        <G_1,G_2>&\doteq_n&T_{G_1}\frac{T_{G_2}}{\Theta(2n,2n,2n)}\;\;\doteq_n\;\frac{1}{(q^2;q)_n}T_{G_1}T_{G_2}.
    \end{eqnarray*}
    The result follows.
\end{proof}

Returning to the case where every edge is labeled $2n$.
Observe that the multiplication defined by the wiring in Figure~\ref{product_wiring} does not depend on the choice of the trivalent vertex.
We now have the following immediate result.
\begin{corollary}
\label{identity}
    Let $G$ be any $2n$-colored trivalent graph in $\mathcal{S}(S^2)$
    , then $T_{<\Theta_{2n},G>}= T_{G}$. 
\end{corollary}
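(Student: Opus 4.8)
The plan is to view $\Theta_{2n}$ as the identity element for the pairing $<,>$ and to deduce the statement directly from Theorem~\ref{codyy}. First I would observe that the theta graph $\Theta_{2n}=\Theta(2n,2n,2n)$ is itself a $2n$-colored trivalent graph that manifestly contains a copy of the vertex $\tau_{2n,2n,2n}$ from Figure~\ref{graph123}; indeed it is built from exactly two such vertices joined by three edges. Hence $\Theta_{2n}$ is an admissible choice for the first slot of the product, and by the preceding observation that the wiring of Figure~\ref{product_wiring} is independent of which trivalent vertex one distinguishes, the hypotheses of Theorem~\ref{codyy} are met with $G_1=\Theta_{2n}$ and $G_2=G$.

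Applying Theorem~\ref{codyy} then yields
\[
    T_{<\Theta_{2n},G>}=\frac{1}{(q^2;q)_{\infty}}\,T_{\Theta_{2n}}\,T_{G}.
\]
The remaining step is to evaluate the tail of the theta graph. By Proposition~\ref{lemmaaa} one has $T_{\Theta_{2n}}=T_{\Theta(2n,2n,2n)}=\frac{(q;q)_{\infty}}{1-q}$, and using the elementary factorization $(q;q)_{\infty}=(1-q)(q^2;q)_{\infty}$ this collapses to $T_{\Theta_{2n}}=(q^2;q)_{\infty}$. Substituting into the displayed identity, the two factors $(q^2;q)_{\infty}$ cancel and we are left with $T_{<\Theta_{2n},G>}=T_{G}$, as claimed.

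There is essentially no serious obstacle here: the corollary is a formal consequence of Theorem~\ref{codyy} once the tail of the theta graph is known. The only points requiring care are (i) checking that $\Theta_{2n}$ legitimately plays the role of $G_1$ in the product, which is precisely where the independence of the wiring from the choice of distinguished vertex is invoked, and (ii) performing the $q$-Pochhammer bookkeeping so that the normalizing factor $1/(q^2;q)_{\infty}$ is exactly annihilated by $T_{\Theta_{2n}}$. Since Proposition~\ref{lemmaaa} is established only later in the text, the argument is logically contingent on that computation of $T_{\Theta(2n,2n,2n)}$, but no further input is needed.
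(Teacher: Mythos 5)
Your proposal is correct and is exactly the argument the paper leaves implicit when it calls the corollary an ``immediate result'': apply Theorem~\ref{codyy} with $G_1=\Theta_{2n}$, $G_2=G$, and use Proposition~\ref{lemmaaa} together with $(q;q)_{\infty}=(1-q)(q^2;q)_{\infty}$ to see that $T_{\Theta_{2n}}=(q^2;q)_{\infty}$ cancels the normalizing factor. Your added care in checking that $\Theta_{2n}$ contains a vertex $\tau_{2n,2n,2n}$ and that the wiring is independent of the chosen vertex simply makes explicit what the paper assumes.
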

By the virtue of Proposition~\ref{existance} we can meaningfully speak about the set of tails of trivalent graphs with edges colored $2n$.
Denote this set by $\mathcal{G}$.
Let $G_1$ and $G_2$ be two elements in $\mathcal{G}$.
Define on the set $\mathcal{G}$ the product $*$ by $$T(G_1)*T(G_2)=(q^2;q)_{\infty}T _{<G_1,G_2>}.$$

In other words the multiplication of the of tails $T(G_1)$ and $T(G_2)$ is equal, up to a factor, to the tail of graph multiplications $<G_1,G_2>$.


\section{Computing the tail of the Theta and Tetrahedron graphs}
\label{Computing}

In this section we investigate the tail of the theta and tetrahedron graphs with edges colored $2n$.
These graphs can be used to compute the tail of infinite families of other graphs using both the product, $<\cdot,\cdot>$, and other techniques as we will demonstrate.
We start with the following useful Lemma.
\begin{lemma}
\label{good}
    Let $n$ be a positive integer and let $F(q,n)$ be a rational function of the form:
    \begin{equation}
    \label{haha}
        F(q,n)=\sum_{i=0}^n  P(q,n,i)
    \end{equation}
    where $P(q,n,i)$ is an element in $\mathbb{Q}(q)$ of the form $ P(q,n,i)=\frac{(q;q)_n}{(q;q)_{n-i}} Q(q,n,i)$ for some $Q(q,n,i) \in \mathbb{Q}(q)$.
    Suppose further that  
    $deg(P(q,n,i))+i < deg(P(q,n,i+1)) $ for all positive integers $n,i$ with $i\leq n$.
    Then $F(q,n)\doteq_n \sum_{i=0}^n Q(q,n,i).$
\end{lemma}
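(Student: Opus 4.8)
The plan is to prove the claim by analyzing how the factor $(q;q)_n/(q;q)_{n-i}$ behaves under the $\doteq_n$ equivalence, and then summing the resulting approximations term by term using the additivity part of Lemma~\ref{easy}. The key observation is that $(q;q)_n/(q;q)_{n-i} = (1-q^{n-i+1})(1-q^{n-i+2})\cdots(1-q^n)$, a product of $i$ factors each of the form $1 - q^{m}$ with $m \geq n-i+1$. Since each such factor is $\doteq$ to $1$ when its lowest nontrivial power $q^m$ exceeds the truncation order, I expect this quotient to be $n$-equivalent to $1$ provided the degree bookkeeping is handled correctly. The heart of the argument is therefore to show that
\begin{equation*}
    P(q,n,i) = \frac{(q;q)_n}{(q;q)_{n-i}} Q(q,n,i) \doteq_{?} Q(q,n,i),
\end{equation*}
at a level of accuracy sufficient to recover $n$ coefficients in the full sum.

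First I would fix $i$ and examine $P(q,n,i)$ individually. Writing $(q;q)_n/(q;q)_{n-i} = \prod_{j=0}^{i-1}(1 - q^{n-j})$, I would note that multiplying $Q(q,n,i)$ by this product perturbs $Q$ only by terms whose degree is shifted up by at least $n-i+1$ relative to $\deg(Q(q,n,i))$; equivalently, $\frac{(q;q)_n}{(q;q)_{n-i}}Q(q,n,i) - Q(q,n,i)$ has minimal degree at least $\deg(Q(q,n,i)) + (n-i+1)$. I would then invoke part (2) of Lemma~\ref{easy}, which states that when $\deg(Q_1) = \deg(P_1) + a$ with $a > m$ one has $P_1 \pm Q_1 \doteq_m P_1$, to conclude that $P(q,n,i) \doteq_{n-i} Q(q,n,i)$. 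The degree hypothesis $\deg(P(q,n,i)) + i < \deg(P(q,n,i+1))$ is what guarantees the terms are \emph{nested} in degree, so that the contribution of $P(q,n,i)$ to the first $n$ coefficients of the full sum lives in a window that the $n-i$-level approximation already captures.

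Next I would assemble the terms. The degree-separation hypothesis ensures that when I form $F(q,n) = \sum_{i=0}^n P(q,n,i)$, the $i$-th summand first contributes to coefficients at degree $\deg(P(q,n,i))$, and these starting degrees are strictly increasing by more than $i$ at each step. Consequently, the coefficient of any fixed low power of $q$ in $F(q,n)$ receives contributions from only finitely many summands, and the error incurred by replacing each $P(q,n,i)$ with $Q(q,n,i)$—an error of relative degree $> n-i$—does not reach into the first $n$ coefficients of $F(q,n)$. I would make this precise by iterating part (3) of Lemma~\ref{easy} (preservation of $\doteq_n$ under addition of degree-aligned series) together with the per-term estimate, so that $\sum_{i=0}^n P(q,n,i) \doteq_n \sum_{i=0}^n Q(q,n,i)$.

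The main obstacle I anticipate is the bookkeeping of minimal degrees and the interaction between the common sign in the definition of $\doteq_n$ and the term-by-term summation: because $\doteq_n$ only records agreement \emph{up to a global sign}, one must be careful that the signs are consistent across all summands when combining them, rather than allowing each term its own sign. The degree hypothesis $\deg(P(q,n,i)) + i < \deg(P(q,n,i+1))$ is the tool that resolves this, since it forces the summands to occupy essentially disjoint low-degree ranges and thereby pins down a single coherent sign; verifying that this spacing is exactly strong enough to both kill the $(q;q)_n/(q;q)_{n-i}$ perturbations and preserve the first $n$ coefficients is the delicate step that the routine calculations must confirm.
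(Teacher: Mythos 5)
Your proposal is correct and takes essentially the same approach as the paper: both arguments rest on the expansion $\frac{(q;q)_n}{(q;q)_{n-i}} = 1 + O(q^{n-i+1})$ together with the degree-gap hypothesis, which pushes every error created by replacing $P(q,n,i)$ with $Q(q,n,i)$ past the window of the first $n$ coefficients of the sum. The only difference is bookkeeping: the paper telescopes through consecutive pairs, proving $P(q,n,i)+P(q,n,i+1)\doteq_n P(q,n,i)+Q(q,n,i+1)$ and applying this inductively, whereas you replace each term at level $n-i$ and then sum, which additionally uses the immediate cumulative consequence $\deg(P(q,n,i)) \geq \deg(P(q,n,0)) + i$ of the hypothesis.
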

\begin{proof}
    Beginning with the relation 
    \begin{equation}
    \label{division}
        \frac{(q;q)_n}{(q;q)_{n-i}}=1-q^{n-i+1}+O(n-i+2),
    \end{equation}
    we have that $deg(P(q,n,i))=deg(Q(q,n,i))$ for all positive integers $n,i$.
    To simplify notation, we will denote $deg(P(q,n,i))$ by $d_{n,i}$.

    Now, for all $i\geq 0,$ it follows from the assumptions and equation~\ref{division} that 
    \begin{equation}
    \label{claim1}
        P(q,n,i)+P(q,n,i+1)\doteq_n P(q,n,i)+Q(q,n,i+1) 
    \end{equation}
    as
    \begin{small}
        \begin{eqnarray*}
            P(q,n,i)+P(q,n,i+1)&=&P(q,n,i)+ \frac{(q;q)_n}{(q;q)_{n-i-1}} Q(q,n,i+1)\\
            &=&P(q,n,i)+ (1-q^{n-i}+O(n-i+1))Q(q,n,i+1)\\
            &=& P(q,n,i)+ Q(q,n,i+1)-q^{n-i} Q(q,n,i+1) +O(n-i+1 + d_{n,i+1} )
        \end{eqnarray*}
    \end{small}
    and thus that $deg (P(q,n,i)+P(q,n,i+1))=deg (P(q,n,i)+Q(q,n,i+1))$.
    Moreover, since $d_{n,i+1}>d_{n,i}+i$ we have: 
    $$deg (-q^{n-i} Q(q,n,i+1))=n-i+d_{n,i+1}>n-i+d_{n,i}+i=d_{n,i}+n.$$ 

    Thus, the first $n$ coefficients of the terms $-q^{n-i} Q(q,n,i+1) +O(n-i+1 + d_{n,i+1} )$ do not contribute to the first $n$ coefficients of $P(q,n,i)+P(q,n,i+1)$ hence equation~\ref{claim1} holds.
    Now by applying~\ref{claim1} inductively to equation~\ref{haha}, we obtain:

    \begin{equation*}
        \sum_{i=0}^n  P(q,n,i) \doteq_n P(q,n,0)+ \sum_{i=1}^{n}  Q(q,n,i) 
    \end{equation*}
    Since $P(q,n,0)=Q(q,n,0)$ the results follows.
\end{proof}
In order to compute the tail of the theta and tetrahedron graphs we now recall a few identities from the skein theory associated to the Kauffman bracket. 
The exact formula of the tetrahedron and theta coefficients can be found in~\cites{MasVog} and we shall not repeat them in full here.
Using the following identity from~\cite{EH}:
\begin{equation*}
    \prod\limits_{i=0}^{j}[n-i]_q=q^{(2 + 3 j + j^2 - 2 n - 2 j n)/4} (1 - q)^{-1 - j}\frac{(q;q)_n}{(q;q)_{n-j-1}}
\end{equation*}
and the formula of the tetrahedron and theta graphs from~\cite{MasVog}, we obtain, after simplification, the following two identities:

\begin{equation}
\label{theta}
    \Theta(2n,2n,2n)= (-1)^{n} q^{-3n/2}\frac{(q; q)^3_n(q;q)_{3n+1}}{(1-q) (q;q)_{2 n}^3}.
\end{equation}
and 
\begin{equation}
\label{tet}
    Tet\left[ 
        \begin{array}{ccc}
            2n & 2n & 2n \\ 
            2n & 2n & 2n%
        \end{array}%
    \right]=\frac{q^{-2n}(q;q)^{12}_n}{(1-q)(q;q)^6_{2n}}\sum\limits_{i=0}^{n}\frac{(-1)^{i}q^{(i +3i^2)/2}(q;q)_{4n-i}}{(q;q)_{n-i}^4(q;q)_{i}^3}.
\end{equation}
We now compute the tail of the theta and tetrahedron graphs.
For simplicity of the notation we will denote $\Theta(2n,2n,2n)$ by $\Theta_{2n}$ and 
$Tet\left[ 
    \begin{array}{ccc}
        2n & 2n & 2n \\ 
        2n & 2n & 2n%
    \end{array}%
\right]$ by $H_{2n}$.

\begin{remark}
    Proposition~\ref{lemmaaa} part $(1)$ computes the tail of the theta graph which, following Theorem~\ref{cody thm} and the discussion of Figure~\ref{example123}, is equivalent to computing the tail of the trefoil.
    This provides an alternative method to earlier computations~\cites{Armond1,Hajij2}.
\end{remark}

\begin{proposition}
    \label{lemmaaa}
    The tails of the theta and tetrahedron graphs are given by:
    \begin{enumerate}
        \item $T(\Theta_{2n})= \frac{(q;q)_{\infty}}{1-q} $
        \item $T(H_{2n})=\frac{(q;q)^3_\infty}{(1-q)}\sum\limits_{i=0}^{\infty}\frac{(-1)^{i} q^{(i +3i^2)/2}}{(q;q)^3_i}$
    \end{enumerate} 
\end{proposition}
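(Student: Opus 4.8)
The plan is to read off both tails by letting $n\to\infty$ directly in the closed forms~\ref{theta} and~\ref{tet}. The single fact driving everything is that $(q;q)_m\doteq_n(q;q)_\infty$ whenever $m\ge n$: indeed $(q;q)_\infty=(q;q)_m\prod_{k>m}(1-q^k)$ with $\prod_{k>m}(1-q^k)=1+O(q^{m+1})$, so the two series share their first $m+1$ coefficients. Consequently any $q$-Pochhammer symbol whose subscript is bounded below by $n$ may be replaced by $(q;q)_\infty$ without disturbing the first $n$ coefficients, and such replacements propagate through products and reciprocals by Lemma~\ref{easy}(1) and Lemma~\ref{easy2}. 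Throughout, overall powers of $q$ and the signs $(-1)^n$ are irrelevant, as $\doteq$ is taken up to a monomial and a common sign (absorbed by the normalization convention fixed earlier).

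For part~(1) I would discard the prefactor $(-1)^nq^{-3n/2}$ and reduce to $\tfrac{1}{1-q}\cdot\frac{(q;q)^3_n(q;q)_{3n+1}}{(q;q)^3_{2n}}$. The subscripts $n$, $3n+1$, $2n$ all exceed $n$, so replacing each factor by $(q;q)_\infty$ (Lemma~\ref{easy}(1) on the numerator, Lemma~\ref{easy2} on the reciprocal $1/(q;q)^3_{2n}$) gives $\frac{(q;q)^3_n(q;q)_{3n+1}}{(q;q)^3_{2n}}\doteq_n\frac{(q;q)^4_\infty}{(q;q)^3_\infty}=(q;q)_\infty$. Hence $\Theta_{2n}\doteq_n\frac{(q;q)_\infty}{1-q}$ for every $n$, and since this series has degree $0$, Definition~\ref{df} identifies it as the tail $T(\Theta_{2n})$.

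Part~(2) treats the prefactor identically: dropping $q^{-2n}$ and replacing $(q;q)_n,(q;q)_{2n}$ by $(q;q)_\infty$ gives $\frac{q^{-2n}(q;q)^{12}_n}{(1-q)(q;q)^6_{2n}}\doteq_n\frac{(q;q)^6_\infty}{1-q}$. The content is in the sum $S_n=\sum_{i=0}^n\frac{(-1)^iq^{(i+3i^2)/2}(q;q)_{4n-i}}{(q;q)^4_{n-i}(q;q)^3_i}$, where the factor $(q;q)^4_{n-i}$ is problematic: for $i$ close to $n$ its subscript is small and it is nowhere near $(q;q)_\infty$. Removing it is exactly the role of Lemma~\ref{good}. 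Writing $\frac{1}{(q;q)^4_{n-i}}=\frac{1}{(q;q)^4_n}\bigl(\tfrac{(q;q)_n}{(q;q)_{n-i}}\bigr)^4$ and setting $Q(q,n,i)=\frac{(-1)^iq^{(i+3i^2)/2}(q;q)_{4n-i}}{(q;q)^4_n(q;q)^3_i}$, I would apply Lemma~\ref{good} with the fourth power of the ratio in place of the single ratio (the fourth power still expands as $1+O(q^{n-i+1})$, which is all that the proof of Lemma~\ref{good} uses; equivalently one peels the four factors off one at a time). The degree hypothesis is the only quantitative point: each summand has minimal degree $(i+3i^2)/2$, so $deg(P(q,n,i))+i=(3i^2+3i)/2<(3i^2+7i+4)/2=deg(P(q,n,i+1))$, as required. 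This yields $S_n\doteq_n\frac{1}{(q;q)^4_n}\sum_{i=0}^n\frac{(-1)^iq^{(i+3i^2)/2}(q;q)_{4n-i}}{(q;q)^3_i}$.

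Finally I would pass to the infinite sum. After pulling out $\frac{1}{(q;q)^4_n}\doteq_n\frac{1}{(q;q)^4_\infty}$ (Lemma~\ref{easy2}), it remains to show $\sum_{i=0}^n\frac{(-1)^iq^{(i+3i^2)/2}(q;q)_{4n-i}}{(q;q)^3_i}\doteq_n\sum_{i=0}^\infty\frac{(-1)^iq^{(i+3i^2)/2}(q;q)_\infty}{(q;q)^3_i}$. Choosing $M$ minimal with $(M+3M^2)/2\ge n$, every term with $i\ge M$ has minimal degree $\ge n$ and is invisible to $\doteq_n$ (Lemma~\ref{easy}(2)), so both sides agree with their truncations $\sum_{i=0}^{M-1}$; on this finite range $4n-i\ge 3n\ge n$ forces $(q;q)_{4n-i}\doteq_n(q;q)_\infty$ termwise, and Lemma~\ref{easy}(3) adds the finitely many terms. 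Multiplying back the prefactor through Lemma~\ref{easy}(1) gives $\frac{(q;q)^6_\infty}{1-q}\cdot\frac{1}{(q;q)^3_\infty}\sum_{i=0}^\infty\frac{(-1)^iq^{(i+3i^2)/2}}{(q;q)^3_i}=\frac{(q;q)^3_\infty}{1-q}\sum_{i=0}^\infty\frac{(-1)^iq^{(i+3i^2)/2}}{(q;q)^3_i}$, the claimed tail. The main obstacle is precisely this interchange of the $n\to\infty$ limit with the infinite summation: one cannot replace $(q;q)_{n-i}$ by $(q;q)_\infty$ uniformly in $i$, so the argument must first neutralize the small-subscript factors with Lemma~\ref{good} and only then truncate and approximate, with the degree-growth check feeding Lemma~\ref{good} as the one genuinely computational step.
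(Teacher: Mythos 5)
Your proposal is correct and takes essentially the same route as the paper's own proof: both start from the closed forms~\ref{theta} and~\ref{tet}, discard the monomial prefactors, replace the large-subscript Pochhammer symbols via Lemma~\ref{easy} and Lemma~\ref{easy2}, and invoke Lemma~\ref{good} to remove the $(q;q)^4_{n-i}$ factors before passing to the infinite sum. You are in fact somewhat more careful than the paper at the two points it glosses over (the iterated, fourth-power application of Lemma~\ref{good} and the final truncation argument justifying the interchange of $n\to\infty$ with the summation), and your inequality $deg(P(q,n,i))+i<deg(P(q,n,i+1))$ is the correct one --- the paper misprints it with the inequality sign reversed.
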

\begin{proof}
    \begin{enumerate}
    \item First observe that
        \begin{equation}
        \label{0}
            \frac{(q;q)_{n}}{(q;q)_{2n}}
            =\frac{\displaystyle\prod_{k=0}^{n-1}(1-q^{k+1})}{\displaystyle\prod_{k=0}^{2n-1}(1-q^{k+1})} =\frac{1}{\displaystyle\prod_{k=n}^{2n-1}(1-q^{k+1})}
            =\displaystyle\prod_{k=0}^{n-1}\frac{1}{(1-q^{n+k+1})}
            \doteq_n1.
        \end{equation}
        Moreover,
        \begin{eqnarray*}
        \label{1}
            (q;q)_{3n+1}  \doteq_n (q;q)_{\infty}.
        \end{eqnarray*}
        Then the result follows directly from equation~\ref{theta}.

    \item Starting from~\ref{tet}, and applying the equivalence~\ref{0} using Lemma~\ref{easy} implies that:
        $$\frac{q^{-2n}(q;q)^{12}_n}{(1-q)(q;q)^6_{2n}}\sum\limits_{i=0}^{n} 
            \frac{(-1)^{i}q^{(i +3i^2)/2}(q;q)_{4n-i}}{(q;q)_{n-i}^4(q;q)_{i}^3}
        \doteq_n (q;q)^{6}_n\sum\limits_{i=0}^{n} 
            \frac{(-1)^{i}q^{(i +3i^2)/2}(q;q)_{4n-i}}{(1-q)(q;q)_{n-i}^4(q;q)_{i}^3}.$$ 
        Now consider:
        \begin{equation*}
            F(n,q)=(q;q)^{2}_n\sum\limits_{i=0}^{n}P(q,n,i),
        \end{equation*}
        where 
        \begin{equation*}
            P(q,n,i)=\frac{(-1)^{i}q^{(i +3i^2)/2}(q;q)_{4n-i}(q;q)_{n}^4 }{(1-q)(q;q)_{n-i}^4(q;q)_{i}^3}.
        \end{equation*}
        Observe that $deg(P(q,n,i))=(i +3i^2)/2$, and hence $deg(P(q,n,i))+i> deg(P(q,n,i+1))$.
        Then Lemma~\ref{good} implies: 
        \begin{equation*}
            F(n,q)\doteq_n (q;q)^{2}_n\sum\limits_{i=0}^{n} 
                \frac{(-1)^{i}q^{(i +3i^2)/2}(q;q)_{4n-i}}{(1-q)(q;q)_{i}^3}
            \doteq_n \frac{(q;q)^{3}_n}{(1-q)}\sum\limits_{i=0}^{n} 
                \frac{(-1)^{i}q^{(i +3i^2)/2}}{(q;q)_{i}^3}.
        \end{equation*}
        The result thus follows.
    \end{enumerate}
\end{proof}
We usually work with the normalized colored Jones polynomial, hence it is more natural to also work with the normalized tail.
The normalizing is done by dividing the tail by $\Delta_{n}$.
To this end, we compute the the tail of $\Delta_{n}$. 
\begin{align*}
    \Delta_{n} \doteq_n [n+1]_q=_{~}& \frac{q^{(n+1)/2}-q^{-(n+1)/2}}{q^{1/2}-q^{-1/2}}\\
        =_{~}&\frac{-1}{q^{{-1/2}}}\times \frac{q^{(n+1)/2}-q^{-(n+1)/2}}{1-q}\\
        \doteq_n& (q^{(n+1)/2}-q^{-(n+1)/2} ) \sum_{i=0}^{\infty}q^i\doteq_n \sum_{i=0}^{\infty}q^i=\frac{1}{1-q}
\end{align*}

This can be used to simplify the fraction $\frac{1}{1-q}$ in the formulas obtained in Proposition~\ref{lemmaaa}.
The product of Theorem~\ref{codyy} and these results provide the building blocks to compute the tails of useful formulae and infinite families of alternating links as well.
We illustrate an example of such computations below.

Given any $2n$-colored trivalent graph whose tail we know, we can either add or contract triangular faces.
From~\cites{MasVog} we have the following equality:
\begin{eqnarray}
\label{firsty}
    \left\langle\hspace{3mm}
    \begin{minipage}[h]{0.21\linewidth}
        \vspace{-0pt}
        \scalebox{0.35}{\includegraphics{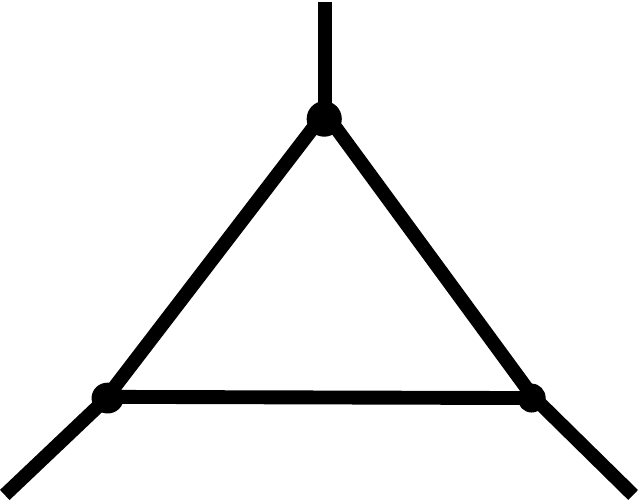}}
        \put(-71,-10){$2n$}
        \put(-3,-10){$2n$}
        \put(-28,50){$2n$}
        \put(-59,25){$2n$}
        \put(-16,25){$2n$}
        \put(-38,0){$2n$}
    \end{minipage}
    \hspace{-8mm}\right\rangle&=&\sigma(n)\left\langle\hspace{3mm}
    \begin{minipage}[h]{0.16\linewidth}
        \vspace{-0pt}
        \scalebox{0.25}{\includegraphics{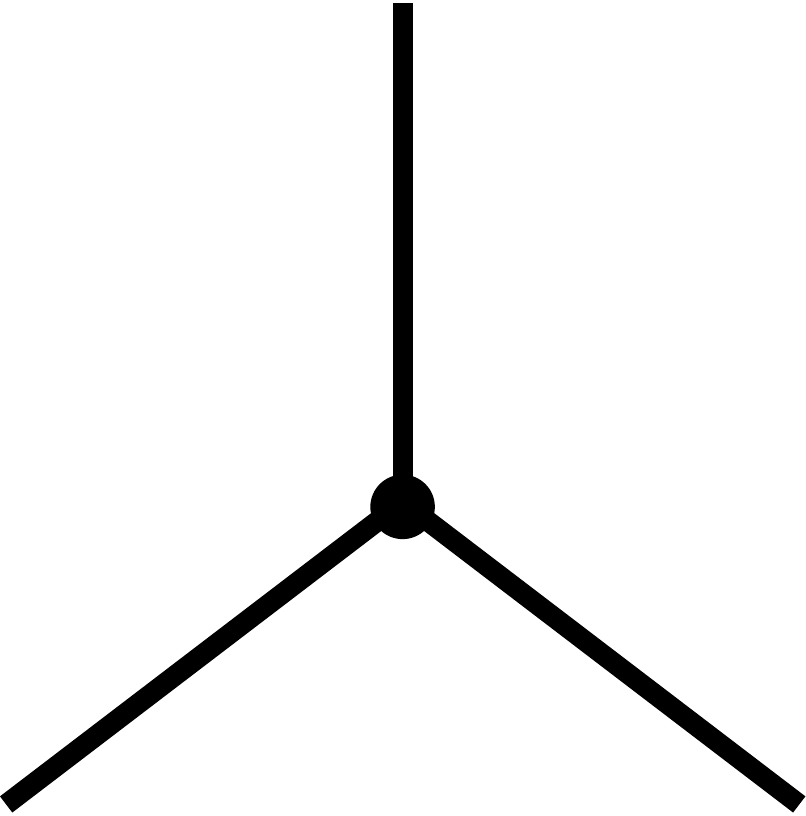}}
        \put(-60,-10){$2n$}
        \put(-1,-10){$2n$}
        \put(-26,50){$2n$}
        \put(-42,23){$Y$}
    \end{minipage}
    \hspace{-3mm}\right\rangle, \qquad\mathrm{ where }\quad\sigma(n)=\frac{H_{2n}}{\Theta_{2n}}.
\end{eqnarray}

\begin{example}
    Consider the multiplication of the two graphs given on the left hand side of Figure~\ref{graph}.
    We know from Lemma~\ref{lemmaaa} that the tail of $H_{2n}$ is given by $T_{H_{2n}}(q)$.
    Hence the tail of the graph given on the right hand side of Figure~\ref{graph}, which we will denote $\Gamma_{2n}$, is $$T_{<H_{2n},H_{2n}>}=\frac{1}{(q^2;q)_{\infty}} T_{H_{2n}}(q)^2$$.

    \begin{figure}[h]
    \centering
        {\includegraphics[scale=0.12]{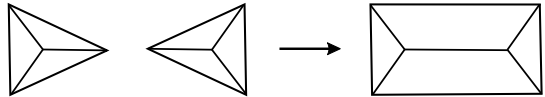}
            \caption{An example of the product $<,>$ on two trivalent graphs, creating $\Gamma_n$.}
        \label{graph}}
    \end{figure} 



    Alternatively, we could have made use of the identity~\ref{firsty} and applied it
    twice to contract the graph $\Gamma_{2n}$.
    The evaluation of the $\Gamma_{2n}$ graph is then given by:
    $$\Gamma_{2n}=\left(\frac{H_{2n}}{\Theta_{2n}}\right)^2\Theta_{2n}=\frac{H_{2n}^2}{\Theta_{2n}}.$$
    Then, using Proposition~\ref{lemmaaa} we could again obtain the tail of the graph $\Gamma_{2n}$.
    Using the correspondence illustrated in Figure~\ref{Correspondence}, these techniques give the tail of the infinite family of alternating links depicted in Figure~\ref{example2}.

    \begin{figure}[h]
    \centering
        {\includegraphics[scale=0.15]{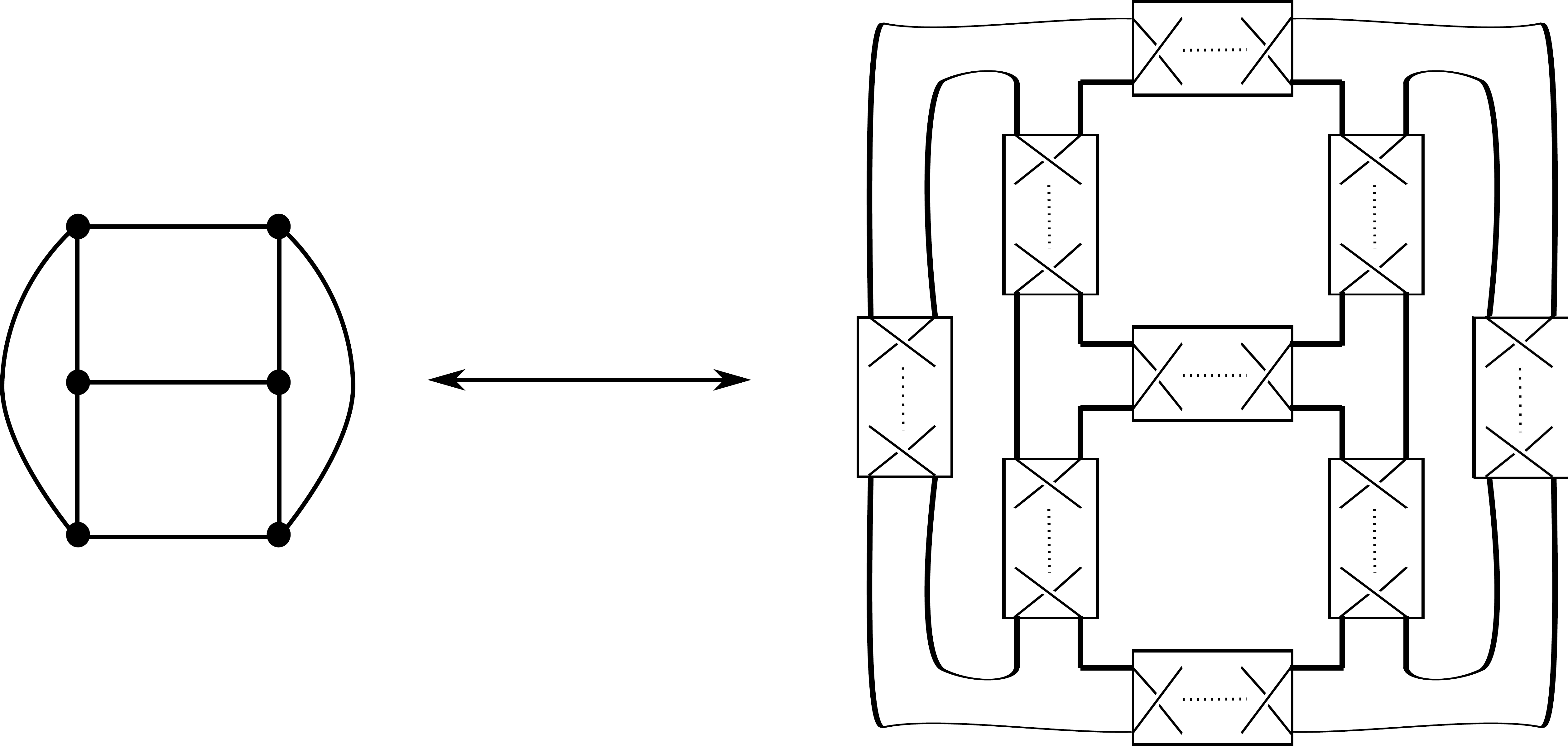}
            \caption{The tail of the graph $\Gamma_{n}$ appearing on the left is equivalent to the tail of the colored Jones polynomial of the family appearing on the right.}
        \label{example2}}
    \end{figure} 
\end{example}

\end{document}